\newtheorem{theorem}{Theorem}[section]
\newtheorem{lemma}[theorem]{Lemma}
\newtheorem{corollary}[theorem]{Corollary}
\theoremstyle{definition}
\newtheorem{definition}[theorem]{Definition}
\newtheorem{proposition}[theorem]{Proposition}
\theoremstyle{remark}
\begin{document}

\title[Embedding into connected, locally connected topological gyrogroups]
{Embedding into connected, locally connected topological gyrogroups}

\author{Jingling Lin}
\address{(Jingling Lin): School of mathematics and statistics,
Minnan Normal University, Zhangzhou 363000, P. R. China}
\email{jinglinglin1995@163.com}

\author{Meng Bao}
\address{(Meng Bao): College of Mathematics, Sichuan University, Chengdu 610064, P. R. China}
\email{mengbao95213@163.com}

\author{Fucai Lin*}
\address{(Fucai Lin): 1. School of mathematics and statistics,
Minnan Normal University, Zhangzhou 363000, P. R. China; 2. Fujian Key Laboratory of Granular Computing and Applications,
Minnan Normal University, Zhangzhou 363000, P. R. China}
\email{linfucai@mnnu.edu.cn; linfucai2008@aliyun.com}

\thanks{This work is supported by the Key Program of the Natural Science Foundation of Fujian Province (No: 2020J02043), the National
Natural Science Foundation of China (Grant Nos. 11571158, 12071199), the Institute of Meteorological Big Data-Digital Fujian and Fujian Key Laboratory of Data Science and Statistics.\\
*corresponding author}

\keywords{topological gyrogroups; connectedness; locally connectness; metrizability; $\sigma$-compact.}
\subjclass[2020]{Primary 22A22; secondary 54A20, 20N05, 54A25.}

\begin{abstract}
In this paper, it is proved that every topological gyrogroup $G$ is topologically groupoid isomorphic to a closed subgyrogroup of a connected, locally connected topological gyrogroup $G^{\bullet}$.
\end{abstract}

\maketitle
\section{Introduction}
In the study of the $c$-ball of relativistically admissible velocities with Einstein velocity addition, Ungar discovered that the seemingly structureless Einstein addition of relativistically admissible velocities possesses a rich grouplike structure and he posed the concept of gyrogroups. In 2017, Atiponrat \cite{AW} gave the concept of topological gyrogroups and investigated some basic properties of topological gyrogroups. A topological gyrogroup is a gyrogroup $G$ endowed with a topology such that the binary operation $\oplus: G\times G\rightarrow G$ is jointly continuous and the inverse mapping $\ominus (\cdot): G\rightarrow G$, i.e. $x\rightarrow \ominus x$, is also continuous. Then Cai, Lin and He in \cite{CZ} proved that every topological gyrogroup is a rectifiable space, which deduced that every first-countable topological gyrogroup is metrizable. Moreover, Wattanapan, Atiponrat and Suksumran \cite{WAS2020} proved that every locally compact Hausdorff topological gyrogroup can be embedded in a completely regular topological group as a twisted subset. Since it is obvious that each topological group is a topological gyrogroup, it is natural to consider whether some well-known results of topological groups can be extended to topological gyrogroups. In this paper, we show that every topological gyrogroup $G$ is topologically groupoid isomorphic to a closed subgyrogroup of a connected, locally connected topological gyrogroup $G^{\bullet}$, and we discuss some properties of topological gyrogroups $G$ and $G^{\bullet}$.

\smallskip
\section{Preliminaries}

Throughout this paper, all topological spaces are assumed to be $T_{1}$, unless otherwise is explicitly stated. Let $\mathbb{N}$ be the set of all positive integers and $\omega$ the first infinite ordinal. Next we recall some definitions and facts.

Let $G$ be a nonempty set, and let $\oplus:G\times G\rightarrow G$ be a binary operation on $G$. Then $(G, \oplus)$ is called a groupoid. A function $f$ from a groupoid $(G_{1}, \oplus_{1})$ to a groupoid $(G_{2}, \oplus_{2})$ is said to be a groupoid homomorphism if $f(x_{1}\oplus_{1} x_{2})= f(x_{1})\oplus_{2} f(x_{2})$ for any $x_{1}, x_{2}\in G_{1}$. In addition, a bijective groupoid homomorphism from a groupoid $(G, \oplus)$ to itself will be called a groupoid automorphism. We will write $Aut(G, \oplus)$ for the set of all automorphisms of a groupoid $(G, \oplus)$.

\begin{definition}\cite{UA}
Let $(G, \oplus)$ be a groupoid. The system $(G,\oplus)$ is called a {\it gyrogroup}, if its binary operation satisfies the following conditions:

\smallskip
(G1) There exists a unique identity element $0\in G$ such that $0\oplus a=a=a\oplus0$ for all $a\in G$.

\smallskip
(G2) For each $x\in G$, there exists a unique inverse element $\ominus x\in G$ such that $\ominus x \oplus x=0=x\oplus (\ominus x)$.

\smallskip
(G3) For all $x, y\in G$, there exists $\mbox{gyr}[x, y]\in \mbox{Aut}(G, \oplus)$ with the property that $x\oplus (y\oplus z)=(x\oplus y)\oplus \mbox{gyr}[x, y](z)$ for all $z\in G$.

\smallskip
(G4) For any $x, y\in G$, $\mbox{gyr}[x\oplus y, y]=\mbox{gyr}[x, y]$.
\end{definition}

Notice that a group is a gyrogroup $(G,\oplus)$ such that $\mbox{gyr}[x,y]$ is the identity function for all $x, y\in G$. Then the {\it gyrogroup cooperation} $\boxplus$ is defined by the equation $$ (\divideontimes)a\boxplus b=a\oplus \mbox{gyr}[a,\ominus b](b) \mbox{ for each } a,b\in G.$$ Replacing $b$ by $\ominus b$ in $(\divideontimes)$, along with Identity $(\divideontimes)$ we have the identity $$a\boxminus b=a\ominus \mbox{gyr}[a,b](b) \mbox{ for all }a, b\in G,$$ where we use the obvious notation, $a \boxminus b = a \boxplus (\ominus b)$.

\begin{lemma}{\rm (\cite{UA})}\label{a}
Let $(G, \oplus)$ be a gyrogroup. Then for any $x, y, z\in G$, we obtain the following:

\begin{enumerate}
\smallskip
\item $(\ominus x)\oplus (x\oplus y)=y$. \ \ \ (left cancellation law)

\smallskip
\item $(x\oplus (\ominus y))\oplus gyr[x, \ominus y](y)=x$. \ \ \ (right cancellation law)

\smallskip
\item $(y\ominus x)\boxplus x=y$.

\smallskip
\item $(y\boxplus (\ominus x))\oplus x=y$.

\smallskip
\item $gyr[x, y](z)=\ominus (x\oplus y)\oplus (x\oplus (y\oplus z))$.
\end{enumerate}
\end{lemma}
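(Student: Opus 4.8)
The plan is to derive all five identities directly from the gyrogroup axioms (G1)--(G4) together with the definition $(\divideontimes)$ of the cooperation $\boxplus$; these are classical facts recorded by Ungar, so the reference \cite{UA} may simply be invoked, but the derivations are short enough to indicate explicitly. The crucial point is that none of the cancellation laws are available at the outset, so the argument must bootstrap from the bare axioms.

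First I would establish the left cancellation law (1), which is the engine for everything else. Applying (G3) to the pair $(\ominus a, a)$ gives $\ominus a \oplus (a \oplus z) = (\ominus a \oplus a) \oplus \mbox{gyr}[\ominus a, a](z) = \mbox{gyr}[\ominus a, a](z)$ by (G2), and applying it to $(a, \ominus a)$ gives $a \oplus (\ominus a \oplus z) = \mbox{gyr}[a, \ominus a](z)$. Since the right-hand sides are automorphisms, hence bijections, the composites $L_a \circ L_{\ominus a}$ and $L_{\ominus a} \circ L_a$ of the left translations $L_a \colon z \mapsto a \oplus z$ are bijective, so each $L_a$ is injective and surjective. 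Setting $x = 0$ in (G3) yields $y \oplus z = y \oplus \mbox{gyr}[0, y](z)$, and cancelling $y$ on the left (now legitimate) gives $\mbox{gyr}[0, y] = \mathrm{id}$. The loop property (G4) with $x = \ominus y$ then gives $\mbox{gyr}[\ominus y, y] = \mbox{gyr}[\ominus y \oplus y, y] = \mbox{gyr}[0, y] = \mathrm{id}$, so the first displayed computation collapses to $\ominus a \oplus (a \oplus z) = z$, which is (1).

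Next, identity (2) is immediate from (G3): substituting $z = y$ and replacing $y$ by $\ominus y$ gives $(x \oplus \ominus y) \oplus \mbox{gyr}[x, \ominus y](y) = x \oplus (\ominus y \oplus y) = x \oplus 0 = x$. Identity (5) follows by left-cancelling $x \oplus y$ in (G3): since $x \oplus (y \oplus z) = (x \oplus y) \oplus \mbox{gyr}[x,y](z)$, applying (1) gives $\mbox{gyr}[x,y](z) = \ominus(x \oplus y) \oplus (x \oplus (y \oplus z))$. For (3) I would unfold the cooperation: by $(\divideontimes)$ and the loop property (G4), which yields $\mbox{gyr}[(y \ominus x), \ominus x] = \mbox{gyr}[y, \ominus x]$, one has $(y \ominus x) \boxplus x = (y \oplus \ominus x) \oplus \mbox{gyr}[y, \ominus x](x)$, and this equals $y$ by (2).

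The one identity needing slightly more machinery is (4), which is the companion right cancellation law $(y \boxminus x) \oplus x = y$. Here I would first record the gyration inversion law $\mbox{gyr}[a,b]^{-1} = \mbox{gyr}[b,a]$ and the resulting right gyroassociative law $(a \oplus b) \oplus c = a \oplus (b \oplus \mbox{gyr}[b,a](c))$, both obtained from (G3) and (1). Writing $y \boxplus (\ominus x) = y \boxminus x = y \ominus \mbox{gyr}[y, x](x)$ via the identity displayed after $(\divideontimes)$, I would then rewrite $(y \ominus \mbox{gyr}[y,x](x)) \oplus x$ using the right gyroassociative law and cancel via (1), collapsing it to $y$. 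The main obstacle is precisely this bootstrapping: extracting bijectivity of left translations and the triviality of the boundary gyrations $\mbox{gyr}[0,y]$ and $\mbox{gyr}[\ominus y, y]$ from the axioms alone, and then securing the gyration inversion identity, before any of the stated equalities can be manipulated freely; once (1) and (2) are in hand, the remaining items are short unfoldings of $(\divideontimes)$ combined with (G4).
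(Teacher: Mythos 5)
The paper itself gives no argument for this lemma---it is quoted from Ungar's book \cite{UA} with a bare citation---so there is no internal proof to compare against, and your reconstruction has to be judged on its own terms. Your derivations of items (1), (2), (3) and (5) are correct and complete: the bootstrap for (1) (bijectivity of left translations from $L_{\ominus a}\circ L_a=\mbox{gyr}[\ominus a,a]$ and $L_a\circ L_{\ominus a}=\mbox{gyr}[a,\ominus a]$, then $\mbox{gyr}[0,y]=\mathrm{id}$ and, via (G4), $\mbox{gyr}[\ominus y,y]=\mathrm{id}$) is exactly the standard argument, and (2), (3) and (5) do each follow in one line from (G3), (G4) and (1) as you describe.

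Item (4), however, has a genuine gap. Write $g=\mbox{gyr}[y,x]$, so $y\boxminus x=y\ominus g(x)$. Applying your right gyroassociative law $(a\oplus b)\oplus c=a\oplus(b\oplus\mbox{gyr}[b,a](c))$ with $a=y$, $b=\ominus g(x)$, $c=x$ gives
$$(y\ominus g(x))\oplus x=y\oplus\bigl(\ominus g(x)\oplus\mbox{gyr}[\ominus g(x),y](x)\bigr),$$
and the inner bracket is \emph{not} of the form $\ominus w\oplus w$: to make it vanish you need $\mbox{gyr}[\ominus\mbox{gyr}[y,x]x,\,y](x)=\mbox{gyr}[y,x](x)$, which is an instance of the nested gyroautomorphism identities, not of left cancellation; the same obstruction appears if you instead try to match the expression against item (2). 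Moreover, the gyroautomorphism inversion law $\mbox{gyr}[a,b]^{-1}=\mbox{gyr}[b,a]$ is not ``obtained from (G3) and (1)'': by (5) one has $\mbox{gyr}[a,b]=L_{\ominus(a\oplus b)}\circ L_a\circ L_b$, hence $\mbox{gyr}[a,b]^{-1}=L_{\ominus b}\circ L_{\ominus a}\circ L_{a\oplus b}$, and identifying this with $L_{\ominus(b\oplus a)}\circ L_b\circ L_a=\mbox{gyr}[b,a]$ genuinely requires the loop property (G4) together with intermediate results (gyrosum inversion, nested gyrations)---it is a real theorem in Ungar's development, not a formal consequence of the gyrator identity. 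So your plan for (4) needs either the nested identity $\mbox{gyr}[a,\ominus\mbox{gyr}[a,b]b]\,\mbox{gyr}[a,b]=\mathrm{id}$ established beforehand, or a different route (for instance, proving directly that the right translation $y\mapsto y\oplus x$ is surjective, which is essentially equivalent); as written, the final cancellation step does not go through.
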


About the gyrogroup cooperation, we have the following Proposition~\ref{mt2}.

\begin{proposition}\label{mt1}
Let $(G, \oplus)$ be a gyrogroup. Then for any $x,y,z\in G$, $y=z$ if and only if $y\oplus x=z\oplus x$.
\end{proposition}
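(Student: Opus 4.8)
The plan is to treat the two implications separately. The forward direction is immediate: substituting $y=z$ into the operation gives $y\oplus x=z\oplus x$. All the content lies in the converse, which is exactly a right cancellation law, so the strategy is to show that right translation by $x$, the map $w\mapsto w\oplus x$, is injective by producing an explicit left inverse for it.

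The candidate left inverse is the map $w\mapsto w\boxminus x=w\boxplus(\ominus x)$ coming from the gyrogroup cooperation. To justify this I would invoke Lemma~\ref{a}(3), namely $(y\ominus x)\boxplus x=y$, and substitute $\ominus x$ for $x$ throughout. Here one uses the standard identity $\ominus(\ominus x)=x$, which follows from the uniqueness clause in (G2), since $x$ satisfies both $x\oplus(\ominus x)=0$ and $(\ominus x)\oplus x=0$ and hence must be the inverse of $\ominus x$. After the substitution $y\ominus x$ becomes $y\oplus(\ominus(\ominus x))=y\oplus x$ and $\boxplus x$ becomes $\boxplus(\ominus x)=\boxminus x$, so Lemma~\ref{a}(3) reads $(y\oplus x)\boxminus x=y$ for all $y$. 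This is precisely the assertion that $w\mapsto w\boxminus x$ undoes right translation by $x$.

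With this identity the converse is a one-line computation: applying $\boxminus x$ to both sides of $y\oplus x=z\oplus x$ yields $y=(y\oplus x)\boxminus x=(z\oplus x)\boxminus x=z$.

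I expect the only real obstacle to be conceptual rather than computational. Because a gyrogroup is not associative, one cannot cancel by adjoining $\ominus x$ on the right, as $(w\oplus x)\oplus(\ominus x)$ generally differs from $w$ by the gyration $\mathrm{gyr}[w,x]$; it is the cooperation $\boxplus$, together with axiom (G4) that underlies the cancellation identities of Lemma~\ref{a}, that absorbs this stray gyration. The key insight is therefore that the correct cancelling operation is $\boxminus x$ rather than $\oplus(\ominus x)$, and recognizing it inside Lemma~\ref{a}(3) via the substitution $x\mapsto\ominus x$; the remaining steps are routine.
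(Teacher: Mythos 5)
Your proof is correct and is essentially the paper's argument in different packaging: since $w\boxminus x = w\oplus\mathrm{gyr}[w,x](\ominus x)$, your cancelling map $w\mapsto w\boxminus x$ is literally the same expression $(y\oplus x)\oplus\mathrm{gyr}[y\oplus x,x](\ominus x)$ that the paper obtains from Lemma~\ref{a}(2) together with axiom (G4). You reach it instead via Lemma~\ref{a}(3) under the substitution $x\mapsto\ominus x$, which is a valid and equally short route, and your identification of $\boxminus x$ (rather than $\oplus(\ominus x)$) as the correct right-cancelling operation is exactly the point of the paper's proof.
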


\begin{proof}
The necessity is trivial. On the other hand, it follows from the right cancellation law that $$y=(y\oplus x)\oplus \mbox{gyr}[y,x](\ominus x)=(y\oplus x)\oplus \mbox{gyr}[y\oplus x,x](\ominus x)$$ and $$z=(z\oplus x)\oplus \mbox{gyr}[z,x](\ominus x)=(z\oplus x)\oplus \mbox{gyr}[z\oplus x,x](\ominus x).$$ By the hypothesis, $y\oplus x=z\oplus x$, so it is clear that $y=z$.
\end{proof}

\begin{proposition}\label{mt2}
Let $(G, \oplus)$ be a gyrogroup. Then for any $x,y,z\in G$, $y=z$ if and only if $x\boxplus y=x\boxplus z$.
\end{proposition}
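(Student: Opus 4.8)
The forward implication is immediate, so the content is the reverse implication: $x\boxplus y=x\boxplus z$ should force $y=z$. The difficulty, compared with Proposition~\ref{mt1}, is that in $x\boxplus y=x\oplus\mbox{gyr}[x,\ominus y](y)$ the variable $y$ occurs twice --- once as the right argument and once inside the gyration --- so one cannot simply cancel $x$ on the left and read off $y$. Note also that the identities in Lemma~\ref{a}(3)--(4) only yield cancellation of an $x$ appearing on the \emph{right} of $\boxplus$, which is the opposite of what is needed here.

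The plan is first to rewrite $x\boxplus y$ so that $y$ enters through a single invertible expression. Applying Lemma~\ref{a}(5) to the triple $(x,\ominus y,y)$ and using (G1)--(G2) (so that $\ominus y\oplus y=0$ and $x\oplus 0=x$), one computes
$$\mbox{gyr}[x,\ominus y](y)=\ominus(x\ominus y)\oplus x,$$
and hence, by the definition of the gyrogroup cooperation,
$$x\boxplus y=x\oplus\bigl(\ominus(x\ominus y)\oplus x\bigr).$$
I expect the derivation of this identity to be the main (indeed essentially the only) obstacle; once it is in hand, the argument reduces to a chain of cancellations.

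Assuming $x\boxplus y=x\boxplus z$, I would substitute the identity above on both sides to obtain $x\oplus\bigl(\ominus(x\ominus y)\oplus x\bigr)=x\oplus\bigl(\ominus(x\ominus z)\oplus x\bigr)$. Cancelling $x$ on the left via the left cancellation law Lemma~\ref{a}(1) gives $\ominus(x\ominus y)\oplus x=\ominus(x\ominus z)\oplus x$, and then Proposition~\ref{mt1} cancels $x$ on the right to yield $\ominus(x\ominus y)=\ominus(x\ominus z)$. By uniqueness of inverses (G2) this gives $x\ominus y=x\ominus z$, that is $x\oplus\ominus y=x\oplus\ominus z$; one further application of the left cancellation law produces $\ominus y=\ominus z$, and uniqueness of inverses finally yields $y=z$. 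Thus every step after establishing the rewriting is a routine appeal to Lemma~\ref{a}(1), Proposition~\ref{mt1}, or (G2).
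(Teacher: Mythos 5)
Your proposal is correct and follows essentially the same route as the paper: both rewrite $x\boxplus y=x\oplus(\ominus(x\ominus y)\oplus x)$ via Lemma~\ref{a}(5) applied to $\mbox{gyr}[x,\ominus y](y)$, and then conclude by left cancellation, Proposition~\ref{mt1}, and uniqueness of inverses. You have merely spelled out the cancellation chain that the paper compresses into ``by Lemma~\ref{a} and Proposition~\ref{mt1}.''
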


\begin{proof}

On the one hand, if $y=z$, it is obvious that $x\boxplus y=x\boxplus z$.

On the other hand, if $x\boxplus y=x\boxplus z$, since $$x\boxplus y=x\oplus \mbox{gyr}[x,\ominus y](y)=x\oplus(\ominus(x\ominus y)\oplus x)$$ and $$x\boxplus z=x\oplus \mbox{gyr}[x,\ominus z](z)=x\oplus(\ominus(x\ominus z)\oplus x),$$ we obtain that $y=z$ by Lemma \ref{a} and Proposition \ref{mt1}.
\end{proof}

The definition of a subgyrogroup is given as follows.

\begin{definition}\cite{ST}
Let $(G,\oplus)$ be a gyrogroup. A nonempty subset $H$ of $G$ is called a {\it subgyrogroup}, denoted
by $H\leq G$, if $H$ forms a gyrogroup under the operation inherited from $G$ and the restriction of $\mbox{gyr}[a,b]$ to $H$ is an automorphism of $H$ for all $a,b\in H$.
\end{definition}

\begin{definition}\cite{AW}
A triple $(G, \tau, \oplus)$ is called a topological gyrogroup if
and only if

\smallskip
(1) $(G,\tau)$  is a topological space;

\smallskip
(2) $(G, \oplus)$ is a gyrogroup;

\smallskip
(3) The binary operation $\oplus:G\rightarrow G$ is continuous where $G\times G$ is endowed with the product topology and the operation of taking the inverse $\ominus(\cdot):G\rightarrow G,$ i.e. $x\rightarrow \ominus x$, is continuous.
\end{definition}

It was proved in \cite[Examples 2 and 3]{AW} that each of the M\"{o}bius gyrogroup or the Einstein gyrogroup equipped with standard topology is a topological gyrogroup which is not a topological group.

\begin{proposition}\cite{AW}
Let $(G, \tau , \oplus)$ be a topological gyrogroup, and let $U$ be a neighborhood of the identity element $\mathbf{0}$. Then the following are true:

\smallskip
(1) There is an open symmetric neighborhood $V$ of $\mathbf{0}$ such that $V \subseteq U$ and $V \oplus V \subseteq U$.

\smallskip
(2)  There is an open neighborhood $V$ of $\mathbf{0}$ such that $\ominus V \subseteq U$
\end{proposition}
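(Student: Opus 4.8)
The plan is to derive both parts directly from the continuity of the two structure maps at the identity, together with two elementary gyrogroup identities that I would verify first: namely $\ominus\mathbf{0}=\mathbf{0}$ and $\ominus(\ominus x)=x$ for every $x\in G$. The first follows by combining (G1) and (G2), since $\ominus\mathbf{0}\oplus\mathbf{0}=\mathbf{0}$ by (G2) while $\ominus\mathbf{0}\oplus\mathbf{0}=\ominus\mathbf{0}$ by (G1). The second follows from the uniqueness clause in (G2): because $x\oplus(\ominus x)=\mathbf{0}=(\ominus x)\oplus x$, the element $x$ is the unique inverse of $\ominus x$, so $\ominus(\ominus x)=x$. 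In particular the inverse map $\ominus(\cdot)$ is a continuous involution, hence a self-homeomorphism of $G$, so it carries open sets to open sets.

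For part (2), I would first shrink $U$ to an open set: choose an open $U'$ with $\mathbf{0}\in U'\subseteq U$. Since $\ominus(\cdot)$ is continuous and $\ominus\mathbf{0}=\mathbf{0}\in U'$, the preimage $V=\{x\in G:\ominus x\in U'\}$ is an open neighborhood of $\mathbf{0}$, and by construction $\ominus V\subseteq U'\subseteq U$, which is exactly the assertion.

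For part (1), I would use joint continuity of $\oplus$ at $(\mathbf{0},\mathbf{0})$. Since $\mathbf{0}\oplus\mathbf{0}=\mathbf{0}\in U'$, there exist open neighborhoods $W_1,W_2$ of $\mathbf{0}$ with $W_1\oplus W_2\subseteq U'$. Setting $W=W_1\cap W_2\cap U'$ gives an open neighborhood of $\mathbf{0}$ with $W\subseteq U$ and $W\oplus W\subseteq U'\subseteq U$. To achieve symmetry, I would then put $V=W\cap(\ominus W)$; this is open because $\ominus(\cdot)$ is a homeomorphism, it contains $\mathbf{0}$, and it satisfies $\ominus V=(\ominus W)\cap(\ominus\ominus W)=(\ominus W)\cap W=V$ using $\ominus(\ominus x)=x$. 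Finally $V\subseteq W\subseteq U$ and $V\oplus V\subseteq W\oplus W\subseteq U$, which completes the proof.

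Since the argument rests only on continuity and the identities above, I do not expect a serious obstacle; the one point that needs care is the symmetrization step, where one must know that $\ominus(\cdot)$ is a homeomorphism, so that $\ominus W$ is open, rather than merely continuous. I would therefore make the involution property explicit at the outset, as above, so that this step is fully justified.
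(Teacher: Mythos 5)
Your argument is correct. Note that the paper does not prove this proposition at all: it is quoted from Atiponrat's paper \cite{AW} without proof, so there is no in-paper argument to compare against. Your route is the standard one and all steps check out: the identities $\ominus\mathbf{0}=\mathbf{0}$ and $\ominus(\ominus x)=x$ do follow from (G1) and the uniqueness clause of (G2) exactly as you say, so $\ominus(\cdot)$ is a continuous involution and hence a homeomorphism; part (2) is then immediate from continuity of $\ominus(\cdot)$ at $\mathbf{0}$, and in part (1) the symmetrization $V=W\cap(\ominus W)$ is legitimate because $\ominus W$ is open and $\ominus(W\cap\ominus W)=(\ominus W)\cap W$ for the bijection $\ominus(\cdot)$. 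You were right to flag the homeomorphism point as the one place needing explicit justification; with the involution property established up front, the proof is complete.
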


 \smallskip
\section{embedded into connected, locally connected topological gyrogroups}
Let $(G,\tau ,\oplus)$ be a topological gyrogroup.  Consider the set $G^{\bullet}$ of all functions $f$ on $J = [0, 1)$ with values in $G$ such that, for some sequence $0 = a_{0} < a_{1} <\ldots< a_{n} = 1$, the function $f$ is constant on $[a_{k}, a_{k+1})$ for each $k = 0, \ldots, n-1$.  For each $f,g,h\in G^{\bullet}$ and $x\in J$, let us define a binary operation $\oplus^{\bullet}$ on $G^{\bullet}$ by $$(f\oplus^{\bullet}g)(x)=f(x)\oplus g(x),$$ and $$\mbox{gyr}[f,g](h)=\ominus^{\bullet}(f\oplus^{\bullet} g)\oplus^{\bullet} (f\oplus^{\bullet}(g\oplus^{\bullet}h)).$$ Then every element $f\in G^{\bullet}$ has a unique inverse $\ominus^{\bullet}f\in G^{\bullet}$ defined by $(\ominus^{\bullet}f)(r)=\ominus(f (r))$ for every $r\in J$. Let us show that $(G^{\bullet}, \oplus^{\bullet})$ is a gyrogroup with identity $\mathbf{0}^{\bullet}$, where $\mathbf{0}^{\bullet}(r)=0$ for each $r\in J$. Actually, for any $f,g,h\in G$, we claim that $$f\oplus^{\bullet}(g\oplus^{\bullet} h)=(f\oplus^{\bullet} g)\oplus^{\bullet}\mbox{gyr}[f,g](h) \mbox{ and }\mbox{gyr}[f\oplus^{\bullet}g,g]=\mbox{gyr}[f,g].$$
Indeed, for any $r\in J$,
\begin{eqnarray}
(f\oplus^{\bullet}(g\oplus^{\bullet}h))(r)&=&f(r)\oplus(g(r)\oplus h(r))\nonumber\\
&=&(f(r)\oplus g(r))\oplus \mbox{gyr}[f(r),g(r)](h(r))\nonumber\\
&=&((f\oplus^{\bullet} g)\oplus^{\bullet}\mbox{gyr}[f,g](h))(r),\nonumber
\end{eqnarray}

 \begin{eqnarray}
 \mbox{gyr}[f\oplus^{\bullet}g,g](h)(r)&=&\mbox{gyr}[f(r)\oplus g(r),g(r)](h(r))\nonumber\\
 &=&\mbox{gyr}[f(r),g(r)](h(r))\nonumber\\
 &=&\mbox{gyr}[f,g](h)(r).\nonumber
\end{eqnarray}
 Thus, $(G^{\bullet}, \oplus^{\bullet})$ is a gyrogroup. The elements of $G^{\bullet}$ are called step  functions.

In order to construct the topology on $G^{\bullet}$, we choose an open neighbourhood $V$ of $\mathbf{0}$ in $G$ and a real number $\varepsilon > 0$, and define a subset $O(V,\varepsilon)$ of $G^{\bullet}$ by $$O(V,\varepsilon)=\{f\in G^{\bullet}:\mu(\{r\in J:f(r)\not\in V\})<\varepsilon\},$$ where $\mu$ is the usual Lebesgue measure on $J$. Now we have the following important theorem.

\begin{theorem}\label{tttt}
The sets $O(V,\varepsilon)$ form a base of a Hausdorff topological gyrogroup topology at the identity of $G^{\bullet}$ such that $G^{\bullet}$ is a topological gyrogroup.
\end{theorem}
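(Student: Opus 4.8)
The plan is to show that the family $\mathcal{N}=\{O(V,\varepsilon): V \text{ an open neighbourhood of } 0 \text{ in } G,\ \varepsilon>0\}$ is a neighbourhood base at $\mathbf{0}^{\bullet}$ for a topology under which $G^{\bullet}$ is a topological gyrogroup, and then to treat the Hausdorff property separately. To do this I would verify the gyrogroup analogue of the classical Pontryagin/Arhangel'skii--Tkachenko conditions for generating such a topology from a base at the identity, namely: (i) $\mathcal{N}$ is a filter base; (ii) for each $U\in\mathcal{N}$ there is $W\in\mathcal{N}$ with $W\oplus^{\bullet}W\subseteq U$; (iii) for each $U\in\mathcal{N}$ there is $W\in\mathcal{N}$ with $\ominus^{\bullet}W\subseteq U$; (iv) for each $U\in\mathcal{N}$ and each $f\in U$ there is $W\in\mathcal{N}$ with $f\oplus^{\bullet}W\subseteq U$; and (v) for each $U\in\mathcal{N}$ and all $f,g\in G^{\bullet}$ there is $W\in\mathcal{N}$ with $\mathrm{gyr}[f,g](W)\subseteq U$. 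Writing $E_f(V)=\{r\in J: f(r)\notin V\}$, every condition reduces to estimating Lebesgue measures of such sets, and the decisive structural fact I shall use throughout is that each element of $G^{\bullet}$ is a step function, hence takes only finitely many values in $G$.

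Conditions (i)--(iii) are the routine part. For (i), given $O(V_1,\varepsilon_1)$ and $O(V_2,\varepsilon_2)$, the set $O(V_1\cap V_2,\min\{\varepsilon_1,\varepsilon_2\})$ lies in their intersection, since $V_1\cap V_2\subseteq V_i$ gives $E_f(V_i)\subseteq E_f(V_1\cap V_2)$. For (ii), given $O(V,\varepsilon)$ I would use the preceding Proposition to pick an open symmetric $W$ with $W\oplus W\subseteq V$ and put $\delta=\varepsilon/2$; since $(f\oplus^{\bullet}g)(r)=f(r)\oplus g(r)\in V$ whenever $f(r),g(r)\in W$, we get $E_{f\oplus^{\bullet}g}(V)\subseteq E_f(W)\cup E_g(W)$ of measure $<2\delta=\varepsilon$, so $O(W,\delta)\oplus^{\bullet}O(W,\delta)\subseteq O(V,\varepsilon)$. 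Condition (iii) is analogous: choosing an open $W$ with $\ominus W\subseteq V$ from the same Proposition yields $E_{\ominus^{\bullet}f}(V)\subseteq E_f(W)$ and hence $\ominus^{\bullet}O(W,\varepsilon)\subseteq O(V,\varepsilon)$.

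Conditions (iv) and (v) are where the step-function structure is essential, and (v) is where I expect the main obstacle to lie. For (iv), fix $O(V,\varepsilon)$ and $f\in O(V,\varepsilon)$, and let $c_1,\dots,c_m$ be the finitely many values of $f$. For each $c_i\in V$ the left translation $x\mapsto c_i\oplus x$ is continuous and sends $0$ to $c_i\in V$, so $W_i=\{x: c_i\oplus x\in V\}$ is an open neighbourhood of $0$; setting $W=\bigcap_{c_i\in V}W_i$ and $\delta=\varepsilon-\mu(E_f(V))>0$, one checks that $f(r)\oplus h(r)\in V$ off $E_f(V)\cup E_h(W)$ for $h\in O(W,\delta)$, giving $f\oplus^{\bullet}O(W,\delta)\subseteq O(V,\varepsilon)$. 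For (v), note that the pair $(f(r),g(r))$ takes only finitely many values $(a_1,b_1),\dots,(a_k,b_k)$; each $\mathrm{gyr}[a_j,b_j]$ is a homeomorphism of $G$ fixing $0$, so $W=\bigcap_{j}\mathrm{gyr}[a_j,b_j]^{-1}(V)$ is an open neighbourhood of $0$, and for $h\in O(W,\varepsilon)$ we have $\mathrm{gyr}[f,g](h)(r)=\mathrm{gyr}[f(r),g(r)](h(r))\in V$ off $E_h(W)$, whence $\mathrm{gyr}[f,g]\big(O(W,\varepsilon)\big)\subseteq O(V,\varepsilon)$. The crux is precisely this uniform-in-$r$ control of the pointwise gyrations: without the finiteness of the value set it would demand equicontinuity of the entire family $\{\mathrm{gyr}[a,b]\}_{a,b\in G}$ at $0$, whereas the step-function hypothesis turns it into a finite intersection of neighbourhoods.

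Finally, for the Hausdorff property it suffices to prove $\bigcap\mathcal{N}=\{\mathbf{0}^{\bullet}\}$. If $f\neq\mathbf{0}^{\bullet}$, choose $r_0$ with $c:=f(r_0)\neq 0$; being a step function, $f$ is constantly $c$ on some interval $[a_k,a_{k+1})$ of positive length $\ell$. Since $G$ is $T_1$ there is an open neighbourhood $V$ of $0$ with $c\notin V$, so $E_f(V)\supseteq[a_k,a_{k+1})$ has measure $\geq\ell$; taking $\varepsilon<\ell$ gives $f\notin O(V,\varepsilon)$. Thus the intersection is trivial, and since $G^{\bullet}$ is by the above a topological gyrogroup (so its translations are homeomorphisms and it is $T_1$, hence Hausdorff), the proof is complete.
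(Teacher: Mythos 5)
Your individual verifications are correct and coincide with the corresponding steps of the paper's proof (the filter-base property, $W\oplus W\subseteq V$, $\ominus W\subseteq V$, the openness condition $f\oplus^{\bullet}O(W,\delta)\subseteq O(V,\varepsilon)$ for $f\in O(V,\varepsilon)$, and the fixed-pair gyration condition, each reduced correctly to measure estimates via the finiteness of the value sets). The gap is in the list of conditions itself: (i)--(v) are not sufficient to conclude that $\oplus^{\bullet}$ is jointly continuous on $G^{\bullet}\times G^{\bullet}$. The paper verifies \emph{nine} conditions from Theorem~4.4 of Jin--Xie, and the ones you omit are exactly the ones that do this work: the conjugation-type condition that for every $f$ and every $O(U,\varepsilon)$ there is $V$ with $(\ominus^{\bullet}f)\oplus^{\bullet}\bigl(O(V,\varepsilon)\oplus^{\bullet}f\bigr)\subseteq O(U,\varepsilon)$ (the paper's item (3)); the \emph{uniform} gyration condition $\bigcup_{g\in O(V,\varepsilon/2)}\mbox{gyr}[g,f]\bigl(O(V,\varepsilon/2)\bigr)\subseteq O(U,\varepsilon)$, in which $g$ ranges over a whole neighbourhood of $\mathbf{0}^{\bullet}$ rather than being fixed (item (6)); and the cooperation condition $O(V,\varepsilon)\boxplus^{\bullet}f\subseteq f\boxplus^{\bullet}O(U,\varepsilon)$ (item (8)). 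Conditions (i)--(iv) only guarantee that the translates $f\oplus^{\bullet}O(V,\varepsilon)$ form a base of a topology in which left translations are homeomorphisms and $\oplus^{\bullet}$ is continuous at $(\mathbf{0}^{\bullet},\mathbf{0}^{\bullet})$; continuity at an arbitrary point $(f,g)$ requires moving a neighbourhood of the identity past $g$ from one side to the other, which is precisely what the conjugation and cooperation conditions supply. This is not a redundancy even in the classical group case: for $G=S_{3}$ with the single set $H=\{e,(12)\}$ as ``neighbourhood base,'' your conditions (i)--(v) all hold, yet the left-coset topology does not make $S_{3}$ a topological group, because $H(13)H\not\subseteq(13)H$. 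Similarly, your (v) with a fixed pair $(f,g)$ does not give the equicontinuity over a neighbourhood of $g$ that is needed when the identity $f\oplus^{\bullet}(g\oplus^{\bullet}h)=(f\oplus^{\bullet}g)\oplus^{\bullet}\mbox{gyr}[f,g](h)$ is applied with $g$ varying; the uniformity you correctly identify as the crux is uniformity in $r\in J$, but the uniformity demanded by item (6) is over the first argument of the gyration.

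A secondary point: your separation argument establishes only $\bigcap\mathcal{N}=\{\mathbf{0}^{\bullet}\}$, whereas the criterion the paper invokes requires the stronger statement $\{\mathbf{0}^{\bullet}\}=\bigcap_{U}\bigl(O(U,\varepsilon)\boxminus O(U,\varepsilon)\bigr)$, verified by choosing $U$ so that $U\boxminus U$ avoids the nonzero values of $f$. Your inference that a $T_{1}$ topological gyrogroup is Hausdorff is fine once the topological gyrogroup structure is available, but that structure is exactly what the missing conditions are needed to establish. The omissions are repairable: each of the three missing conditions can be checked by the same measure-theoretic scheme you already use, again exploiting that $f$, $g$ take only finitely many values so that the required neighbourhood of $\mathbf{0}$ in $G$ is a finite intersection; but as written the proof does not establish the theorem.
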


\begin{proof}
It suffices to prove that the family $$\mathcal{N}(\mathbf{0}^{\bullet})=\{O(V,\varepsilon):V\in\mathcal{N}(\mathbf{0}),\varepsilon >0\}$$  satisfies the nine conditions $(1)\sim(9)$ of Theorem 4.4 in \cite{JX}, where $\mathcal{N}(\mathbf{0})$ is a base for $G$ at $\mathbf{0}$.

\smallskip
(1) Take an arbitrary $U\in \mathcal{N}(\mathbf{0})$ and fix $\varepsilon >0$. Choose $V\in \mathcal{N}(\mathbf{0})$ with $V\oplus V\subseteq U$ and take $f,g\in O(V,\varepsilon/2)$. Then $\mu(\{r\in J: f(r)\oplus g(r)\not\in U\})<\varepsilon$,  this implies $O(V,\varepsilon/2)\oplus^{\bullet}O(V,\varepsilon/2)\subseteq O(U,\varepsilon)$.

\smallskip
(2) Let $O(U,\varepsilon)\in \mathcal{N}(\mathbf{0}^{\bullet})$ and that $f\in O(U,\varepsilon)$. Thus we can find real numbers $0=a_{0} < a_{1} < \ldots < a_{n} = 1 $ such that for any $k = 0, 1, \ldots , n-1$, $f$ is constant
on $J_{k}=[a_{k}, a_{k+1})$, and $f$ takes a value $x_{k}\in G$ on $J_{k}$. Since $f\in O(U,\varepsilon)$, it follows that the number
$\delta=\varepsilon -\mu(\{r\in J:f(r)\not\in U\})$ is positive. Take $V\in \mathcal{N}(\mathbf{0})$ such that if $0\leq k < n$ and
$x_{k} \in U$, then ${x_{k}}\oplus V\subseteq U$. We could show that $f\oplus O(V,\delta) \subseteq O(U,\varepsilon)$. Indeed, take an arbitrary $g\in O(V,\delta),$ then
\begin{eqnarray}
 \mu(\{r\in J:(f\oplus^{\bullet}g)(r)\not\in U\})&\leq& \mu(\{r\in J:x_{k}\oplus g(r)\not\in x_{k}\oplus V\})\nonumber\\
 &=& \mu(\{r\in J:g(r)\not\in V\})\nonumber\\
&<&\varepsilon,\nonumber
\end{eqnarray}
Thererore, $f\oplus^{\bullet}O(V,\delta)\subseteq O(U,\varepsilon).$

\smallskip
(3) Let $O(U,\varepsilon)\in \mathcal{N}(\mathbf{0}^{\bullet})$ and $f\in G^{\bullet}$ be arbitrary. Thus $f$, considered as a function from $J$ to $G$, choose only finitely many distinct values, say, $x_{1}, \cdots , x_{m}$. Takes  an element
$V\in \mathcal{N}(\mathbf{0})$ such that $(\ominus x_{i})\oplus (V\oplus x_{i})\subseteq U$, for each $i=1, \cdots ,m$. Let us show that $(\ominus^{\bullet} f)\oplus^{\bullet}(O(V, \varepsilon)\oplus^{\bullet}f)\subseteq O(U, \varepsilon).$ Indeed, take an arbitrary $g\in O(V,\varepsilon)$, we could show that $(\ominus^{\bullet} f)\oplus^{\bullet}(g\oplus^{\bullet}f)\in O(U, \varepsilon).$ Since
\begin{eqnarray}
 \mu(\{r\in J:((\ominus^{\bullet} f)\oplus^{\bullet}(g\oplus^{\bullet}f))(r)\not\in U\})&\leq& \mu(\{r\in J:(\ominus f(r))\oplus(g(r)\oplus f(r))\not\in U\})\nonumber\\
 &=& \mu(\{r\in J:(\ominus x_{i})\oplus(g(r)\oplus x_{i})\not\in U\})\nonumber\\
 &\leq& \mu(\{r\in J:(\ominus x_{i})\oplus(g(r)\oplus x_{i})\not\in (\ominus x_{i})\oplus (V\oplus x_{i})\})\nonumber\\
 &=&\mu(\{r\in J:g(r)\not\in V \})\nonumber\\
 &<&\varepsilon ,\nonumber
\end{eqnarray}
it follows that $(\ominus^{\bullet} f)\oplus^{\bullet}(O(V, \varepsilon)\oplus^{\bullet}f)\subseteq O(U, \varepsilon).$
\smallskip

(4) Take $O(U, \varepsilon_{1}),O(V, \varepsilon_{2})\in \mathcal{N}(\mathbf{0}^{\bullet})$, and put $W=U\cap V$ and
$\varepsilon= min\{\varepsilon_{1}, \varepsilon_{2}\}$. Hence it is obvious that $O(W, \varepsilon) \subseteq O(U, \varepsilon_{1}) \cap O(V, \varepsilon_{2})$.

\smallskip
(5) Let $O(U,\varepsilon)\in \mathcal{N}(\mathbf{0}^{\bullet})$ and $f,g\in G^{\bullet}$ be arbitrary. Then there exist real numbers $0=a_{0} < a_{1} < \cdots < a_{n} = 1 $ such that for each $k = 0, 1, \cdots , n-1$, $f,g$ is constant
on $J_{k}=[a_{k}, a_{k+1})$, and $f,g$ takes a value $x_{k}, y_{k}\in G$ on $J_{k}$, respectively. Choose $V\in \mathcal{N}(\mathbf{0})$ such that  $\mbox{gyr}[x_{k},y_{k}](V)\subseteq U$. We claim that $\mbox{gyr}[f,g](O(V,\varepsilon))\subseteq O(U,\varepsilon)$. Indeed, for any $h\in O(V,\varepsilon)$, we have
\begin{eqnarray}
 \mu(\{r\in J:\mbox{gyr}[f,g](h)(r)\not\in U\})&=& \mu(\{r\in J:\mbox{gyr}[f(r),g(r)](h(r))\not\in U\})\nonumber\\
 &=& \mu(\{r\in J:\mbox{gyr}[x_{k},y_{k}](h(r))\not\in U\})\nonumber\\
  &\leq& \mu(\{r\in J:\mbox{gyr}[x_{k},y_{k}](h(r))\not\in \mbox{gyr}[x_{k},y_{k}](V)\})\nonumber\\
 &=& \mu(\{r\in J:h(r)\not\in V\})\nonumber\\
 &<& \varepsilon.\nonumber
\end{eqnarray}
This proves $\mbox{gyr}[f,g](h)\subseteq O(U,\varepsilon)$, thus $\mbox{gyr}[f,g](O(V,\varepsilon))\subseteq O(U,\varepsilon).$

\smallskip
(6)  Let $O(U,\varepsilon)\in \mathcal{N}(\mathbf{0}^{\bullet})$ and $f\in G^{\bullet}$ be arbitrary. Then there exist real numbers $0=a_{0} < a_{1} < \cdots < a_{n} = 1 $ such that for each $k = 0, 1, \cdots , n-1$, $f$ is constant on $J_{k}=[a_{k}, a_{k+1})$, and $f$ takes a value $x_{k}\in G$ on $J_{k}$. Therefore, take $V\in \mathcal{N}(\mathbf{0})$ such that  $\bigcup_{v\in V}\mbox{gyr}[v,x_{k}](V)\subseteq U$. For every $g\in O(V,\varepsilon/2)$, let us show that $\mbox{gyr}[g,f](O(V,\varepsilon/2))\subseteq O(U,\varepsilon/2)$. Indeed, for every $h\in O(V,\varepsilon/2)$, there exist real numbers $0=b_{0} < b_{1} < \ldots < b_{m} = 1 $ such that $f,g,h$ are constant for any $k = 0, 1, \cdots , m-1, m\geq n$.

\begin{eqnarray}
 \mu(\{r\in J:\mbox{gyr}[g,f](h)(r)\not\in U\})&=& \mu(\{r\in J:\mbox{gyr}[g(r),f(r)]h(r)\not\in U\})\nonumber\\
 &=& \mu(\{r\in J: g(r)\in V , \mbox{gyr}[g(r),f(r)](h(r))\not\in U\})\nonumber\\
 &+& \mu(\{r\in J:g(r)\not \in V, \mbox{gyr}[g(r),f(r)](h(r))\not\in U\})\nonumber\\
  &\leq& \mu(\{r\in J: g(r)\in V , \mbox{gyr}[g(r),f(r)](h(r))\nonumber\\
  & & \ \ \ \ \ \ \ \ \ \ \ \ \ \ \ \ \ \ \ \ \ \ \not\in \mbox{gyr}[g(r),f(r)](V)\})+ \varepsilon/2\nonumber\\
 &=& \mu(\{r\in J:h(r)\not\in V\})+ \varepsilon/2\nonumber\\
 &<& \varepsilon/2+\varepsilon/2\nonumber\\
 &=&\varepsilon.\nonumber
\end{eqnarray}
Therefore, $\bigcup _{g\in O(V,\varepsilon/2)}\mbox{gyr}[g,f](O(V,\varepsilon/2))\subseteq O(U,\varepsilon)$.

\smallskip
(7) If $f\in G^{\bullet}$ and $f\not=\mathbf{0}^{\bullet}$, then the number $\varepsilon=\mu({r\in J:f(r)\not=\mathbf{0}})$ is positive. Take an element $U\in \mathcal{N}(\mathbf{0})$ such that $U\boxminus U$ does not contain any value of $f$ distinct from $\mathbf{0}$. We have to show that $f\not\in O(U, \varepsilon)\boxminus O(U, \varepsilon)$. Assume the contrary, there exist $g, h\in O(U, \varepsilon)$, real numbers $0=a_{0} < a_{1} < \ldots < a_{n} = 1 $, such that $f=g\boxminus h$, and for each $k = 0, 1, \ldots , n-1$, $f,g,h$ is constant
on $J_{k}=[a_{k}, a_{k+1})$. Then $f(r)=g(r)\boxminus h(r),r\in J$. since $g,h\in O(U, \varepsilon)$, $g(r')\boxminus h(r')\in U\boxminus U$ for some $r'\in J$, a contradiction. Thus $$\{\mathbf{0}^{\bullet}\}=\bigcap_{U\in\mathcal{N}(\mathbf{0})}(O(U, \varepsilon)\boxminus O(U, \varepsilon)).$$

\smallskip
(8) Let $O(U,\varepsilon)\in \mathcal{N}(\mathbf{0}^{\bullet})$ and $f\in G^{\bullet}$ be arbitrary. Choose $V\in \mathcal{N}(\mathbf{0})$ such that  $V\boxplus x\subseteq x\boxplus U$ for any $x\in G$, then $V\subseteq (x\boxplus U)\ominus x$ . We claim that $O(V,\varepsilon)\boxplus^{\bullet} f\subseteq f\boxplus^{\bullet}O(U,\varepsilon)$, it suffices to prove $O(V,\varepsilon)\subseteq (f\boxplus^{\bullet}O(U,\varepsilon))\ominus f$. For every $g\in O(V,\varepsilon)$, there exist real numbers $0=a_{0} < a_{1} < \ldots < a_{n} = 1 $ such that for each $k = 0, 1, \ldots , n-1$, $f,g$ are constant
on $J_{k}=[a_{k}, a_{k+1})$, and $f,g$ take values $x_{k}, y_{k}\in G$ on $J_{k}$, respectively. For the above division, we define the function $h:J\rightarrow G$ such that $$h(r)=\ominus\circ L_{\ominus f(r)}\circ \ominus \circ R_{\ominus f(r)}\circ L_{\ominus f(r)}\circ R_{\oplus f(r)}(g(r)),$$ this implies that $g(r)=((f\boxplus^{\bullet} h)\ominus^{\bullet} f)(r)$ for every $r\in J$. It remains to prove $h\in O(U,\varepsilon)$.
\begin{eqnarray}
 \mu(\{r\in J:h(r)\not\in U\})\nonumber&=& \mu(\{r\in J: (f(r)\boxplus h(r))\ominus f(r)\not\in (f(r)\boxplus U)\ominus f(r)\})\nonumber\\
 &=& \mu(\{r\in J: g(r)\not\in (f(r)\boxplus U)\ominus f(r)\})\nonumber\\
 &\leq& \mu(\{r\in J: g(r)\not\in V\})\nonumber\\
 &<& \varepsilon.\nonumber
\end{eqnarray}
Hence, $h\in O(U,\varepsilon)$. Therefore, $O(V,\varepsilon)\boxplus^{\bullet} f\subseteq f\boxplus^{\bullet}O(U,\varepsilon)$.  By a similar way, we can proof that $f\boxplus^{\bullet}O(V',\varepsilon)\subseteq f\boxplus^{\bullet}O(U,\varepsilon)$ for some $V'\in \mathcal{N}(\mathbf{0}).$

\smallskip
(9) Let $O(U,\varepsilon)\in \mathcal{N}(\mathbf{0}^{\bullet})$ be arbitrary. Choose an element $V\in \mathcal{N}(\mathbf{0})$ such that $\ominus V\subseteq U$. We claim that $\ominus^{\bullet}O(V,\varepsilon)\subseteq O(U,\varepsilon)$. For any $f\in \ominus^{\bullet}O(V,\varepsilon)$, we have
\begin{eqnarray}
 \mu(\{r\in J:f(r)\not\in U\})&\leq& \mu(\{r\in J:f(r)\not\in \ominus V\})\nonumber\\
 &=&\mu(\{r\in J:\ominus f(r)\not\in V\})\nonumber\\
 &<& \varepsilon.\nonumber
\end{eqnarray}
Thus, $\ominus^{\bullet}O(V,\varepsilon)\subseteq O(U,\varepsilon)$.

Therefore, $G^{\bullet}$ is a Hausdorff topological gyrogroup topology with $\mathcal{N}(\mathbf{0}^{\bullet})$ being a local base at the identity of $G^{\bullet}$.
\end{proof}

\bigskip
Recall that a space $X$ is called {\it pathwise connected} if for every $x,y\in X$, there is a continuous function $f:[0,1]\rightarrow X$ such that $f(0)=x,f(1)=y.$ Moreover, $X$ is called {\it locally pathwise connected} if for every neighbourhood $U$ of $x\in X$, there exist a pathwise connected neighbourhood $V$ such that $V\subseteq U$.

\begin{proposition}\label{3mt1}
The topological gyrogroup $G^{\bullet}$ in Theorem~\ref{tttt} is pathwise connected and locally pathwise connected.
\end{proposition}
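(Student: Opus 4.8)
The plan is to exhibit, for each step function $f\in G^{\bullet}$, an explicit path joining the identity $\mathbf{0}^{\bullet}$ to $f$, and to arrange that this same family of paths also witnesses local path connectedness. For $t\in[0,1]$ define the \emph{truncation} $\phi_f(t)\in G^{\bullet}$ by setting $\phi_f(t)(r)=f(r)$ for $r<t$ and $\phi_f(t)(r)=\mathbf{0}$ for $r\geq t$. Since $f$ is constant on finitely many intervals $[a_k,a_{k+1})$, each $\phi_f(t)$ is again a step function, so $\phi_f(t)\in G^{\bullet}$; moreover $\phi_f(0)=\mathbf{0}^{\bullet}$ and $\phi_f(1)=f$. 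Thus pathwise connectedness reduces to proving that $t\mapsto\phi_f(t)$ is continuous.

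The heart of the argument is this continuity, and here I would exploit the measure-theoretic definition of the topology together with the homogeneity of $G^{\bullet}$. Fix $t_0\in[0,1]$ and take $t$ with $|t-t_0|<\varepsilon$; say $t_0\le t$. For $r<t_0$ and for $r\ge t$ the functions $\phi_f(t_0)$ and $\phi_f(t)$ agree, so $(\ominus^{\bullet}\phi_f(t_0)\oplus^{\bullet}\phi_f(t))(r)=\mathbf{0}$ there; this function can differ from $\mathbf{0}$ only on $[t_0,t)$, a set of measure $t-t_0<\varepsilon$. Since every $V\in\mathcal{N}(\mathbf{0})$ contains $\mathbf{0}$, it follows that $\{r\in J:(\ominus^{\bullet}\phi_f(t_0)\oplus^{\bullet}\phi_f(t))(r)\notin V\}$ has measure $<\varepsilon$, i.e. $\ominus^{\bullet}\phi_f(t_0)\oplus^{\bullet}\phi_f(t)\in O(V,\varepsilon)$. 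Applying the left cancellation law (Lemma~\ref{a}(1)) gives $\phi_f(t)=\phi_f(t_0)\oplus^{\bullet}(\ominus^{\bullet}\phi_f(t_0)\oplus^{\bullet}\phi_f(t))\in\phi_f(t_0)\oplus^{\bullet}O(V,\varepsilon)$. Because the left translation $x\mapsto\phi_f(t_0)\oplus^{\bullet}x$ is a homeomorphism of $G^{\bullet}$ (its inverse being $x\mapsto\ominus^{\bullet}\phi_f(t_0)\oplus^{\bullet}x$, both continuous by Theorem~\ref{tttt}), the sets $\phi_f(t_0)\oplus^{\bullet}O(V,\varepsilon)$ form a neighbourhood base at $\phi_f(t_0)$, and so $\phi_f$ is continuous at $t_0$ with $\delta=\varepsilon$. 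Hence $G^{\bullet}$ is pathwise connected.

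For local path connectedness I would reuse the very same truncations. Suppose $f\in O(V,\varepsilon)$. For $r\ge t$ we have $\phi_f(t)(r)=\mathbf{0}\in V$, while for $r<t$ we have $\phi_f(t)(r)=f(r)$; hence $\{r\in J:\phi_f(t)(r)\notin V\}\subseteq\{r\in J:f(r)\notin V\}$, whose measure is $<\varepsilon$. Thus the entire path $\phi_f$ stays inside $O(V,\varepsilon)$, so every point of $O(V,\varepsilon)$ can be joined to $\mathbf{0}^{\bullet}$ by a path lying in $O(V,\varepsilon)$; concatenating two such paths shows that $O(V,\varepsilon)$ is pathwise connected. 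Transporting along the homeomorphisms $x\mapsto g\oplus^{\bullet}x$ turns $\{O(V,\varepsilon):V\in\mathcal{N}(\mathbf{0}),\varepsilon>0\}$ into a base of pathwise connected neighbourhoods at each $g\in G^{\bullet}$, which gives local path connectedness.

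The main obstacle is the continuity step. One cannot simply argue that the two functions are pointwise close, because closeness in $G^{\bullet}$ is measured by Lebesgue measure rather than pointwise, and a gyrogroup admits no direct subtraction. The device that resolves this is to form $\ominus^{\bullet}\phi_f(t_0)\oplus^{\bullet}\phi_f(t)$, control the measure of the set where it is nonzero, and then transport the estimate back to $\phi_f(t)$ itself via the left cancellation law and the fact that left translations are homeomorphisms; confirming that these translated basic sets genuinely constitute a neighbourhood base at $\phi_f(t_0)$ is the one point demanding care.
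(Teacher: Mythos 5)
Your proof is correct and follows essentially the same route as the paper: a truncation path from $\mathbf{0}^{\bullet}$ to $f$ whose continuity is controlled by the Lebesgue measure of the set where two path values differ, combined with translation/homogeneity to get local path connectedness. The only differences are cosmetic — you truncate globally at the point $t$ while the paper truncates proportionally within each interval of constancy of $f$ (both yield the bound $\mu(\{r: f_t(r)\neq f_s(r)\})\leq|t-s|$), and you spell out the left-cancellation/left-translation argument for continuity that the paper leaves implicit.
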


\begin{proof}
Since every topological gyrogroup is a homogeneous space, it suffices to prove that each set $O(V,\varepsilon)$ defined in Theorem~\ref{tttt} is pathwise connected. Let $f\in O(V, \varepsilon)$ be arbitrary. We have to show that there exists a continuous mapping $\varphi:[0, 1]\rightarrow O(V,\varepsilon)$ such that $\varphi(0)=\mathbf{0}^{\bullet}$ and $\varphi(1)=f$. Indeed, by the definition of $G^{\bullet}$, there exist real numbers $a_{0}, a_{1},\ldots, a_{n}$ with $0=a_{0}<a_{1} <\ldots< a_{n}=1$ such that for
 every $k=0,1,\ldots,n-1$, $f$ is constant on $[a_{k}, a_{k+1})$. For each $t\in [0, 1]$ and for each non-negative $k<n$, let $b_{k,t}=a_{k}+t(a_{k+1}-a_{k})$. Therefore, $b_{k,0}=a_{k}, b_{k,1}=a_{k+1}$ and $a_{k}<b_{k,t}<a_{k+1}$ if $0<t<1$, for every $k= 0, 1,\ldots,n-1$. Define a mapping $\varphi:[0,1]\rightarrow G^{\bullet}$ by $\varphi(0)=\mathbf{0}^{\bullet}, \varphi(1)=f$ and, for $0<t<1$ and $0\leq r<1$,

$$\varphi(t)(r)=\left\{
\begin{matrix}
f(r),& &\mbox{ if } a_{k}\leq r<b_{k,t};\\
\mathbf{0},& &\mbox{ if } b_{k,t}\leq r<a_{k+1}.
\end{matrix}
\right.$$

Obviously, $f_{t}=\varphi(t)\in O(V, \varepsilon)$ for every $t\in [0, 1]$. It follows from the definition of $\varphi$ that $$\mu(\{r\in J: f_{t}(r)\not=f_{s}(r)\})\leq |t-s|$$ for arbitrary $s,t\in [0, 1]$. This inequality and the definition of the topology of the gyrogroup $G^{\bullet}$ given in Theorem~\ref{tttt}, together imply that the mapping $\varphi$ is continuous. Hence, every element $f\in O(V, \varepsilon)$
can be connected with the identity $\mathbf{0}^{\bullet}$ of $G^{\bullet}$ by a continuous path lying in $O(V, \varepsilon)$. This implies immediately that every two element of $O(V, \varepsilon)$ can also be connected by a continuous path inside of $O(V, \varepsilon)$, so that the set $O(V, \varepsilon)$ is pathwise connected. The similar argument applied to whole gyrogroup $G^{\bullet}$ in place of $O(V, \varepsilon)$ implies the pathwise connectedness of $G^{\bullet}$.
\end{proof}

Now we can prove our main result in this paper.

\begin{theorem}\label{3mt2}
For each topological gyrogroup $G$, there exists a natural topological
groupoid isomorphism $i_{G}: G\rightarrow G^{\bullet}$ of $G$ onto a closed subgyrogroup of the pathwise connected, locally pathwise connected topological gyrogroup $G^{\bullet}$.
\end{theorem}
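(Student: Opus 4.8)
The plan is to take the obvious candidate for $i_G$, the map sending each $x\in G$ to the constant step function $\widehat{x}\in G^{\bullet}$ with $\widehat{x}(r)=x$ for all $r\in J$, and to verify in turn that $i_G$ is an algebraic embedding, a topological embedding, and has closed image; the connectedness assertions are already furnished by Proposition~\ref{3mt1}. First I would check that $i_G$ is an injective groupoid homomorphism. Injectivity is immediate by evaluating at any $r\in J$, and the pointwise definition of $\oplus^{\bullet}$ gives $i_G(x\oplus y)=i_G(x)\oplus^{\bullet}i_G(y)$ at once; the same pointwise computation yields $\ominus^{\bullet}i_G(x)=i_G(\ominus x)$ and $\mbox{gyr}[i_G(x),i_G(y)](i_G(z))=i_G(\mbox{gyr}[x,y](z))$. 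Hence the image $i_G(G)$, which is precisely the set of constant functions, contains $\mathbf{0}^{\bullet}$ and is closed under $\oplus^{\bullet}$, under $\ominus^{\bullet}$ and under all gyrations, so it is a subgyrogroup of $G^{\bullet}$ isomorphic to $G$. Naturality is clear, since any continuous homomorphism $G\to H$ carries constant functions to constant functions.

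Next I would show that $i_G$ is a homeomorphism onto $i_G(G)$ equipped with the subspace topology. As $i_G$ is a homomorphism of topological gyrogroups, homogeneity (left translations are homeomorphisms, and $i_G(a)\oplus^{\bullet}i_G(G)=i_G(G)$) reduces the verification to the identity. For continuity, given $O(V,\varepsilon)$, if $x\in V$ then $\{r:i_G(x)(r)\notin V\}=\emptyset$, so $i_G(V)\subseteq O(V,\varepsilon)$. For openness onto the image, the crucial remark is that for a constant function the set $\{r:x\notin V\}$ has measure $0$ or $1$; consequently $i_G(G)\cap O(V,1)=i_G(V)$, so $i_G$ carries the basic neighbourhood $V$ of $\mathbf{0}$ onto a relatively open subset of $i_G(G)$. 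Transporting both statements by left translations shows that $i_G$ is a topological isomorphism onto $i_G(G)$.

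The main work, and the step I expect to be the crux, is to prove that $i_G(G)$ is closed in $G^{\bullet}$. Let $f\in G^{\bullet}\setminus i_G(G)$, so that $f$ is non-constant and therefore assumes two distinct values $x_{1}\neq x_{2}$ on sets $A_{1},A_{2}\subseteq J$ of positive measures $m_{1},m_{2}$. Put $d=\ominus x_{2}\oplus x_{1}$, which differs from $\mathbf{0}$ because $x_{1}\neq x_{2}$. Using that $G$ is Hausdorff (a standing property of $T_{1}$ topological gyrogroups, already exploited implicitly in item (7) of Theorem~\ref{tttt}) together with continuity of $v\mapsto d\oplus\mbox{gyr}[\ominus x_{2},x_{1}](v)$ at $\mathbf{0}$, I would choose $V\in\mathcal{N}(\mathbf{0})$ so that $d\oplus\mbox{gyr}[\ominus x_{2},x_{1}](v)\notin V$ for every $v\in V$. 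Set $\delta=\min\{m_{1},m_{2}\}>0$ and consider the open neighbourhood $f\oplus^{\bullet}O(V,\delta)$ of $f$. If it met $i_G(G)$, say $i_G(c)=f\oplus^{\bullet}h$ with $h\in O(V,\delta)$, then by the left cancellation law of Lemma~\ref{a} we get $h(r)=\ominus f(r)\oplus c$; since $h$ takes the constant value $\ominus x_{1}\oplus c$ on $A_{1}$ and $\delta\leq m_{1}$, the condition $h\in O(V,\delta)$ forces $\ominus x_{1}\oplus c\in V$, say $c=x_{1}\oplus v$ with $v\in V$. But then on $A_{2}$ the value of $h$ is $\ominus x_{2}\oplus(x_{1}\oplus v)=(\ominus x_{2}\oplus x_{1})\oplus\mbox{gyr}[\ominus x_{2},x_{1}](v)=d\oplus\mbox{gyr}[\ominus x_{2},x_{1}](v)\notin V$ by the choice of $V$, whence $A_{2}\subseteq\{r:h(r)\notin V\}$ and $\mu(\{r:h(r)\notin V\})\geq m_{2}\geq\delta$, contradicting $h\in O(V,\delta)$. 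Thus $f\oplus^{\bullet}O(V,\delta)$ is disjoint from $i_G(G)$, so $i_G(G)$ is closed. Combining the three steps with Proposition~\ref{3mt1} yields the theorem. The delicate point is engineering $V$ so that the two constant values of $h$ on $A_{1}$ and $A_{2}$ cannot both lie inside $V$; this is exactly where the gyroassociative identity $\ominus x_{2}\oplus(x_{1}\oplus v)=(\ominus x_{2}\oplus x_{1})\oplus\mbox{gyr}[\ominus x_{2},x_{1}](v)$ and the Hausdorffness of $G$ are both indispensable.
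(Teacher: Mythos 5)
Your proposal is correct and follows essentially the same route as the paper: the same constant-function embedding $x\mapsto x^{\bullet}$, with closedness of the image established by taking a non-constant $f$ with two distinct values $x_{1},x_{2}$ on sets of positive measure and choosing a neighbourhood $V$ of $\mathbf{0}$ so small that the two forced values of $\ominus f(\cdot)\oplus c$ cannot both lie in $V$. The only differences are cosmetic: the paper arranges $(x_{1}\oplus V)\cap(x_{2}\oplus V)=\emptyset$ directly where you work at the identity via the gyroassociative law, and you spell out the topological-embedding verification that the paper declares clear.
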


\begin{proof}
We assign to each $x\in G$ the element $x^{\bullet}$ of $G^{\bullet}$ defined by $x^{\bullet}(r)=x$ for every $r\in J$. It is clear that the function $i_{G}: G\rightarrow G^{\bullet}$, where $i(x) = x^{\bullet}$ for each $x\in G$, is a topological groupoid monomorphism of $G$ to $G^{\bullet}$, and the latter gyrogroup is pathwise connected and locally pathwise connected by Proposition \ref{3mt1}.

It suffices to verify that $i_{G}(G)$ is a closed subgyrogroup of $G^{\bullet}$. Let $f\in G^{\bullet}\setminus i_{G}(G)$ be arbitrary. Thus $f$ cannot be constant as a function from $J$ to $G$. Then, we can choose real numbers $a_{1}, a_{2}, a_{3}, a_{4}$ satisfying $0\leq a_{1} < a_{2} \leq a_{3} < a_{4} \leq 1$ and two distinct elements $x_{1}, x_{2}\in G $ such that $f$ is equal to $x_{1}$ on $[a_{1}, a_{2})$ and $f$ is equal to $x_{2}$ on $[a_{3},a_{4})$. Choose an open symmetric neighbourhood $V$ of the identity in $G$ such that $(x_{1}\oplus V )\cap (x_{2}\oplus V)=\emptyset$, and let $\varepsilon=\mbox{min}\{a_{2}-a_{1},a_{4}-a_{3}\}$. We claim that $i_{G}(G)\cap (f\oplus^{\bullet}O(V,\varepsilon))=\emptyset$, that is, $f\oplus^{\bullet}g \not \in i_{G}(G)$ for every $g\in O(V,\varepsilon)$. If not, then exist $g'\in O(V,\varepsilon)$ such that $f\oplus^{\bullet} g'=x^{\bullet}$ for some $x\in G$. This implies that $g'(r)=\ominus x_{1}\oplus x$ for each $r\in[a_{1},a_{2})$, $g'(r)=\ominus x_{2}\oplus x$ for each $r\in[a_{3},a_{4})$, and $\{\ominus x_{1}\oplus x, \ominus x_{2}\oplus x\}\not\subseteq V$ (otherwise $x\in (x_{1}\oplus V )\cap (x_{2}\oplus V)\not=\emptyset$). Without loss of generality, suppose $\ominus x_{1}\oplus x\not\in V$. Put $\varepsilon=(a_{2}-a_{1})/2$, then $\mu(\{r\in J:g(r)\not\in V\})\geq a_{2}-a_{1}>\varepsilon,$
which is a contradiction with $g'\in O(V,\varepsilon)$. Thus, the complement $G^{\bullet} \setminus i_{G}(G)$ is open in $G^{\bullet}$ and, hence, $i_{G}(G)$ is closed in $G^{\bullet}$.
\end{proof}

 Next,we identify a topological gyrogroup $G$ with its image $i_{G}(G)\subseteq G^{\bullet}$ defined in Theorem \ref{3mt2}. Let us claim that the gyrogroup $G$ is placed in $G^{\bullet}$ in a very special way, permitting an extension of continuous bounded pseudometrics from $G$ over $G^{\bullet}$.

\begin{definition}
 A  pseudometric on $X$ is a function $d : X \times X \rightarrow [0, \infty)$ satisfying for all $x_{1}, x_{2}, x_{3}\in X$,

\smallskip
(1)$d(x_{1},x_{2})=0$ if $x_{1}=x_{2}$,

\smallskip
(2)$d(x_{1},x_{2})=d(x_{2},x_{1}),$

\smallskip
(3)$d(x_{1},x_{3})\leq d(x_{1},x_{2})+d(x_{2},x_{3}).$
\end{definition}

\begin{theorem}
Let $d$ be a continuous bounded pseudometric on a topological gyrogroup $G$. Then $d$ admits an extension to a continuous bounded pseudometric $d^{\bullet}$ over the gyrogroup $G^{\bullet}$. In addition, if $d$ is a metric on $G$ generating the topology of $G$, then $d^{\bullet}$ is also a metric on $G^{\bullet}$ generating the topology of $G^{\bullet}$.
\end{theorem}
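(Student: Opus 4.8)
The plan is to define the extension by integration. For $f,g\in G^{\bullet}$ I would set
\[
d^{\bullet}(f,g)=\int_{J}d(f(r),g(r))\,d\mu(r).
\]
Since $f$ and $g$ are step functions, they are constant on the blocks of a common partition $0=a_{0}<a_{1}<\cdots<a_{n}=1$, so $r\mapsto d(f(r),g(r))$ is a bounded measurable step function and the integral reduces to the finite sum $\sum_{k}d(x_{k},y_{k})(a_{k+1}-a_{k})$; boundedness of $d$ by some $M$ gives $d^{\bullet}\le M$. The pseudometric axioms transfer from $d$ to $d^{\bullet}$ by monotonicity and linearity of the integral: symmetry and the triangle inequality are immediate, and $d^{\bullet}(f,f)=0$. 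Evaluating on the constant functions $x^{\bullet},y^{\bullet}$ gives $d^{\bullet}(x^{\bullet},y^{\bullet})=d(x,y)$, so under the identification of $G$ with $i_{G}(G)$ the function $d^{\bullet}$ genuinely extends $d$.

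The main work is continuity of $d^{\bullet}$ on $G^{\bullet}\times G^{\bullet}$. Because $d^{\bullet}$ is a pseudometric, the quadrilateral inequality $|d^{\bullet}(f,g)-d^{\bullet}(f_{0},g_{0})|\le d^{\bullet}(f,f_{0})+d^{\bullet}(g,g_{0})$ reduces this to the following one-variable claim: for each fixed $f_{0}$ and each $\eta>0$ there is a $G^{\bullet}$-neighbourhood $W$ of $f_{0}$ with $d^{\bullet}(f,f_{0})<\eta$ for all $f\in W$. I would take $W=f_{0}\oplus^{\bullet}O(V,\varepsilon)$, which is a neighbourhood of $f_{0}$ since left translation is a homeomorphism of the topological gyrogroup $G^{\bullet}$. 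Writing $f=f_{0}\oplus^{\bullet}g$ with $g\in O(V,\varepsilon)$ gives $f(r)=f_{0}(r)\oplus g(r)$, where $f_{0}$ assumes only finitely many values $x_{1},\dots,x_{m}$. The crux is the choice of $V$ and $\varepsilon$: using continuity of $(x,v)\mapsto d(x\oplus v,x)$ at each point $(x_{k},\mathbf{0})$, where it vanishes, I pick $V\in\mathcal{N}(\mathbf{0})$ so small that $d(x_{k}\oplus v,x_{k})<\eta/2$ for every $v\in V$ and every $k$, and using boundedness I pick $\varepsilon>0$ with $M\varepsilon<\eta/2$. Splitting $J$ into the ``good'' set $\{r:g(r)\in V\}$, on which the integrand is $<\eta/2$, and the ``bad'' set $\{r:g(r)\notin V\}$, of measure $<\varepsilon$ and with integrand $\le M$, yields $d^{\bullet}(f,f_{0})<\eta/2+M\varepsilon<\eta$. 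I expect this to be the main obstacle, as it is the single place where the finiteness of the value set of $f_{0}$, the joint continuity of $\oplus$ and $d$, and the boundedness of $d$ must all be combined.

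For the metric statement, first $d^{\bullet}$ is a metric: if $d^{\bullet}(f,g)=0$ then $d(f(r),g(r))=0$ for almost every $r$, hence $f(r)=g(r)$ a.e., and since two step functions agreeing off a null set must agree on every block of a common partition, $f=g$. It remains to show $d^{\bullet}$ generates the topology of $G^{\bullet}$. Continuity of $d^{\bullet}$ already gives that the metric topology is contained in the gyrogroup topology, so I only need that each basic neighbourhood $f_{0}\oplus^{\bullet}O(V,\varepsilon)$ contains a $d^{\bullet}$-ball about $f_{0}$. Since $d$ generates the topology of $G$, for each value $x_{k}$ of $f_{0}$ there is $\rho_{k}>0$ with the $d$-ball of radius $\rho_{k}$ about $x_{k}$ contained in $x_{k}\oplus V$; put $\rho=\min_{k}\rho_{k}>0$. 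On the block where $f_{0}(r)=x_{k}$, the condition $\ominus f_{0}(r)\oplus f(r)\notin V$ forces $f(r)\notin x_{k}\oplus V$ and hence $d(f(r),f_{0}(r))\ge\rho$, so by the Markov inequality
\[
\mu(\{r\in J:\ominus f_{0}(r)\oplus f(r)\notin V\})\le\mu(\{r:d(f(r),f_{0}(r))\ge\rho\})\le\frac{1}{\rho}\,d^{\bullet}(f,f_{0}).
\]
Thus $d^{\bullet}(f,f_{0})<\rho\varepsilon$ implies $\ominus^{\bullet}f_{0}\oplus^{\bullet}f\in O(V,\varepsilon)$, i.e. $f\in f_{0}\oplus^{\bullet}O(V,\varepsilon)$, so the $d^{\bullet}$-ball of radius $\rho\varepsilon$ about $f_{0}$ lies inside this neighbourhood. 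The two inclusions together show that the metric and gyrogroup topologies coincide, which completes the proof.
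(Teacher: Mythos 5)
Your proposal is correct and follows essentially the same route as the paper: your integral $\int_J d(f(r),g(r))\,d\mu(r)$ is exactly the paper's finite sum $\sum_k(a_{k+1}-a_k)d(x_k,y_k)$, the continuity argument is the same good-set/bad-set split over a neighbourhood $f_0\oplus^{\bullet}O(V,\varepsilon)$, and the reverse inclusion for the metric case is the same Markov-type estimate giving the ball of radius $\rho\varepsilon$ inside $f_0\oplus^{\bullet}O(V,\varepsilon)$. If anything, your version is slightly cleaner in tracking the bound $M$ on $d$ explicitly and in making the quadrilateral-inequality reduction to one-variable continuity explicit, both of which the paper leaves implicit.
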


\begin{proof}
Assume that $d$ is bounded without loss of generality. Take any $f,g\in G^{\bullet}$ and a partition $0=a_{0}<a_{1}<\ldots<a_{n}=1$ of $J$ such that both $f$ and $g$ are constant on each interval $J_{k}=[a_{k},a_{k+1})$ and are equal to $x_{k}$ and $y_{k}$ on this interval, respectively. We define a distance $d^{\bullet}(f,g)$ by the formula $$d^{\bullet}(f,g)=\sum^{n-1}_{k=0}(a_{k+1}-a_{k})\cdot d(x_{k},y_{k}).$$ it is clear that the number $d^{\bullet}(f,g)$ is non-negative and does not depend on the choice of the partition $a_{0},a_{1},\ldots,a_{n}$ of $J$ which keeps $f$ and $g$ constant on each $[a_{k}, a_{k+1})$. It is easy to see that the function $d^{\bullet}$ is symmetric and satisfies the triangle inequality, that is, $d^{\bullet}$ is a pseudometric on $G^{\bullet}$.

It remains to show that $d^{\bullet}$ is continuous. Let $f\in G^{\bullet}$ and $\varepsilon>0$. Take a partition $0=a_{0}<a_{1}<\ldots<a_{n}=1$ of $J$ such that $f$ has a constant value $x_{k}$ on each $J_{k}=[a_{k},a_{k+1})$. Since $d$ is continuous on $G$, there exists an open neighbourhood $V$ of the identity in $G$ such that $d(x_{k}, x_{k}\oplus y)< \varepsilon/2$ for each $y\in V$, where $k=0,1,\ldots,n-1$. Let us show that $d^{\bullet}(f, g)<\varepsilon$ for each $g\in f\oplus^{\bullet}O(V, \varepsilon/2)$. Suppose that $g\in f\oplus^{\bullet}O(V, \varepsilon/2)$, we can suppose without loss of generality that $g$ is constant on each $J_{k}$ and takes a value $y_{k}$ on this interval. Put $L$ the set of every integers $k\leq n-1$ such that $y_{k}\in x_{k}\oplus V$, and let $M=\{0,1,\ldots,n-1\}\setminus L$. It follows from the choice of $d$ and $g$ that $d(x_{k},y_{k})<\varepsilon/2$ for each $k\in L$, and that $\sum _{k\in M}(a_{k+1}-a_{k})<\varepsilon/2$. Since $d$ is bounded, and $\sum _{k\in L}(a_{k+1}-a_{k})<\varepsilon/2$, the definition of $d^{\bullet}$ implies that

\begin{eqnarray}
 d^{\bullet}(f,g)&\leq&\sum_{k\in L}(a_{k-1}-a_{k})d(x_{k},y_{k})+\sum_{k\in M}(a_{k+1}-a_{k})\nonumber\\
 &<&max_{k\in L}d(x_{k},y_{k})+\varepsilon/2\nonumber\\
 &<&\varepsilon/2+\varepsilon/2\nonumber\\
 &=&\varepsilon.\nonumber
\end{eqnarray}
This verifies the continuity of $d^{\bullet}$ on $G^{\bullet}$.

Finally, assume that $d$ is a metric on $G$ generating the topology of $G$. Thus $d^{\bullet}(f,g)>0$ for any distinct $f,g\in G^{\bullet}$, so that $d^{\bullet}$ is a metric on the set $G^{\bullet}$. Choose an arbitrary element $f\in G^{\bullet}$  and a basic open neighbourhood $f\oplus^{\bullet}O(V,\varepsilon)$ of $f$ in $G^{\bullet}$. Put $0=a_{0}<a_{1}<\cdots<a_{n}=1$ be a partition of $J$ such that $f$ takes a constant value $u_{k}$ on each $[a_{k}, a_{k+1})$. We can find an element $\delta>0$ such that $
\{u\in G:d(u_{k},u)<\delta\}\subseteq u_{k}\oplus V$, for every $k=0,1,\cdots,n-1$. Therefore, if $k<n$ and $y\in G\setminus (u_{k}\oplus V)$, thus $d(u_{k},y)\geq \delta$. Put $\delta_{0}=\varepsilon \cdot \delta$. We could show that $$\{g\in G^{\bullet}:d^{\bullet}(f,g)<\delta_{0}\}\subseteq f\oplus^{\bullet}O(V,\varepsilon).$$
Indeed, assume that $g\in G^{\bullet}$ meets $d^{\bullet}(f,g)<\delta_{0}$. There exists a partition $0=b_{0}<b_{1}<\ldots<b_{N}=1$ of $J$ refining the partition $0=a_{0}<a_{1}<\ldots<a_{n}=1$ such that $g$ has a constant value $y_{i}$ on each $[b_{i},b_{i+1})$, for $i=0,\ldots,N-1$. Then $f$ is also constant on each $[b_{i}, b_{i+1})$, and let $f(b_{i})=x_{i}$. Denote by $P$ the set of all non-negative integers $i<N$ such that $y_{i}\not\in x_{i}\oplus V$. Clearly, $$\sum_{i\in P}(b_{i+1}-b_{i})d(x_{i},y_{i})\leq \sum_{0\leq i< N}(b_{i+1}-b_{i})d(x_{i},y_{i})=d^{\bullet}(f,g)<\delta_{0}.$$ Since $d(x_{i},y_{i})\geq \delta$ if $y_{i}\in G \setminus (x_{i}\oplus V)$, we have that $$\delta\cdot\sum_{i\in P}(b_{i+1}-b_{i})\leq \sum_{i\in P}(b_{i+1}-b_{i})d(x_{k},y_{k})<\delta_{0}.$$ It follows that $$\sum_{i\in P}(b_{i+1}-b_{i})<\delta_{0}/\delta =\varepsilon.$$ The definition of $P$ implies the equality $$D=\{r\in J:g(r)\not\in f(r)\oplus V\}=\bigcup\{[b_{i},b_{i+1}):i\in P\}.$$
It follows that $\mu(D)<\varepsilon$ , whence $\ominus f\oplus g\in O(V,\varepsilon)$ or, equivalently, $g\in f\oplus^{\bullet}O(V,\varepsilon)$. This proves the inclusion $$\{g\in G^{\bullet}:d^{\bullet}(f,g)<\delta_{0}\}\subseteq f\oplus^{\bullet}O(V,\varepsilon).$$ Since the sets $f\oplus^{\bullet}O(V,\varepsilon)$ form a base of $G^{\bullet}$ at $f$ and the metric $d^{\bullet}$ is continuous, it follows that $d^{\bullet}$ generates the topology of the gyrogroup $G^{\bullet}$.
\end{proof}

\begin{corollary}
Let $f$ be a continuous real-valued bounded function on a topological gyrogroup $G$. Then $f$ admits an extension to a bounded continuous function on the gyrogroup $G^{\bullet}$.
\end{corollary}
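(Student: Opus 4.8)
The plan is to construct the extension by integration, in direct analogy with the pseudometric $d^{\bullet}$ of the preceding theorem. Set $M=\sup_{x\in G}|f(x)|$, which is finite by hypothesis. Given $g\in G^{\bullet}$, choose a partition $0=a_{0}<a_{1}<\cdots<a_{n}=1$ on which $g$ is constant with value $x_{k}$ on $[a_{k},a_{k+1})$, and define
$$f^{\bullet}(g)=\sum_{k=0}^{n-1}(a_{k+1}-a_{k})\,f(x_{k})=\int_{J}f(g(r))\,d\mu(r).$$
First I would check that this value is independent of the chosen partition (refining a partition leaves each summand unchanged), so that $f^{\bullet}$ is well defined; that it is bounded by $M$, since $|f^{\bullet}(g)|\le\int_{J}|f(g(r))|\,d\mu\le M$; and that it extends $f$, since for a constant function $x^{\bullet}$ one obtains $f^{\bullet}(x^{\bullet})=f(x)$. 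Thus $f^{\bullet}\circ i_{G}=f$ under the identification of Theorem~\ref{3mt2}.

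The substantive step is continuity, which I would verify at an arbitrary point $g_{0}\in G^{\bullet}$, say with values $x_{0},\ldots,x_{n-1}$ on the intervals of a fixed partition. Fix $\varepsilon>0$. Since $f$ is continuous and $g_{0}$ takes only finitely many values, I can choose a neighbourhood $V\in\mathcal{N}(\mathbf{0})$ with $|f(x_{k}\oplus y)-f(x_{k})|<\varepsilon/2$ for every $y\in V$ and every $k<n$, by intersecting the finitely many neighbourhoods produced by continuity of $f$ at the points $x_{k}$. Put $\delta=\varepsilon/(4M)$ (the case $M=0$ being trivial). Because left translation by $g_{0}$ is a homeomorphism of $G^{\bullet}$, the set $g_{0}\oplus^{\bullet}O(V,\delta)$ is a neighbourhood of $g_{0}$, and every $h$ in it has the form $h=g_{0}\oplus^{\bullet}u$ with $u\in O(V,\delta)$.

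To estimate $|f^{\bullet}(h)-f^{\bullet}(g_{0})|$ I would pass to a common refinement on which $g_{0}$, $u$ and $h$ are all constant, so that $h(r)=g_{0}(r)\oplus u(r)$ pointwise, and split $J$ into $A=\{r:u(r)\in V\}$ and $B=\{r:u(r)\notin V\}$. On $A$ the integrand $|f(g_{0}(r)\oplus u(r))-f(g_{0}(r))|$ is below $\varepsilon/2$ by the choice of $V$, contributing at most $\varepsilon/2$; on $B$, which has measure $\mu(B)<\delta$ because $u\in O(V,\delta)$, the integrand is at most $2M$, contributing at most $2M\delta=\varepsilon/2$. Hence
$$|f^{\bullet}(h)-f^{\bullet}(g_{0})|\le\int_{A}\tfrac{\varepsilon}{2}\,d\mu+\int_{B}2M\,d\mu<\varepsilon,$$
which gives continuity at $g_{0}$. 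The main obstacle is precisely this last estimate: one must control the contribution of the ``bad'' set $B$ where $u$ leaves $V$, and this is exactly where the boundedness of $f$ is used together with the smallness of $\mu(B)$ built into the definition of $O(V,\delta)$ — the same mechanism that drives the continuity argument for $d^{\bullet}$ in the preceding theorem.
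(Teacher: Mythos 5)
Your argument is correct, but it proves the corollary by a direct construction rather than by deducing it from the pseudometric extension theorem that immediately precedes it, which is how the paper intends the statement to be obtained (it is stated without proof as a corollary of that theorem). The intended route is: put $d(x,y)=|f(x)-f(y)|$, a continuous bounded pseudometric on $G$; extend it to $d^{\bullet}$ on $G^{\bullet}$ by the theorem; and then extend $f$ by the standard Lipschitz (McShane-type) formula $F(g)=\inf_{x\in G}\bigl(f(x)+d^{\bullet}(i_{G}(x),g)\bigr)$, which is $1$-Lipschitz with respect to $d^{\bullet}$, hence continuous, bounded, and restricts to $f$ on $i_{G}(G)$. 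Your explicit formula $f^{\bullet}(g)=\int_{J}f(g(r))\,d\mu(r)$ buys more: it is canonical, linear and positive in $f$, and your $\varepsilon$--$\delta$ verification of its continuity is sound (the split of $J$ into the set where $u(r)\in V$ and its complement of measure less than $\delta=\varepsilon/(4M)$ is exactly the right mechanism, and the use of $g_{0}\oplus^{\bullet}O(V,\delta)$ as a basic neighbourhood matches the paper's own usage). In fact you could have shortened your continuity argument by observing that $|f^{\bullet}(g)-f^{\bullet}(h)|\le\sum_{k}(a_{k+1}-a_{k})\,|f(x_{k})-f(y_{k})|=d^{\bullet}(g,h)$ for the pseudometric $d(x,y)=|f(x)-f(y)|$, so that continuity of $f^{\bullet}$ is inherited directly from the continuity of $d^{\bullet}$ established in the theorem; this would tie your construction back to the paper's intended derivation while keeping the explicit formula.
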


The construction of the gyrogroup $G^{\bullet}$  allows to extend continuous groupoid homomorphisms.

\begin{proposition}
Let $\varphi:G\rightarrow H$ be a continuous groupoid homomorphism of topological
gyrogroups. Then $\varphi$ admits a natural extension to a continuous groupoid homomorphism $\varphi^{\bullet}:G^{\bullet}\rightarrow H^{\bullet}$. In addition, if $\varphi$ is open and onto, then so is $\varphi^{\bullet}$.
\end{proposition}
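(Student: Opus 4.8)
The plan is to define the extension by pointwise composition and then verify the three required properties—being a homomorphism, continuity, and openness—each time reducing to the corresponding property of $\varphi$ together with a Lebesgue-measure estimate. First I would define $\varphi^{\bullet}:G^{\bullet}\rightarrow H^{\bullet}$ by $\varphi^{\bullet}(f)=\varphi\circ f$ for every $f\in G^{\bullet}$. Since such an $f$ is constant on each interval of some partition of $J$ and takes only finitely many values, the composition $\varphi\circ f$ is again a step function with values in $H$, so $\varphi^{\bullet}$ is well defined and maps $G^{\bullet}$ into $H^{\bullet}$. The homomorphism property is then an immediate pointwise computation: for all $f,g\in G^{\bullet}$ and $r\in J$,
\[
\varphi^{\bullet}(f\oplus^{\bullet}g)(r)=\varphi(f(r)\oplus g(r))=\varphi(f(r))\oplus\varphi(g(r))=(\varphi^{\bullet}(f)\oplus^{\bullet}\varphi^{\bullet}(g))(r),
\]
using that $\varphi$ is a groupoid homomorphism. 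Moreover $\varphi^{\bullet}(x^{\bullet})=(\varphi(x))^{\bullet}$ for each $x\in G$, that is $\varphi^{\bullet}\circ i_{G}=i_{H}\circ\varphi$, which is the precise sense in which $\varphi^{\bullet}$ naturally extends $\varphi$ under the embeddings of Theorem~\ref{3mt2}.

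Next I would prove continuity. Because $G^{\bullet}$ and $H^{\bullet}$ are topological gyrogroups their left translations are homeomorphisms, and $\varphi^{\bullet}$ is a homomorphism; hence a standard left-cancellation argument reduces the problem to verifying continuity at $\mathbf{0}^{\bullet}$. Given a basic neighbourhood $O(W,\varepsilon)$ of $\mathbf{0}^{\bullet}$ in $H^{\bullet}$, the continuity of $\varphi$ at $\mathbf{0}$ yields $V\in\mathcal{N}(\mathbf{0})$ with $\varphi(V)\subseteq W$. Then for any $f\in O(V,\varepsilon)$ one has $\{r\in J:\varphi(f(r))\notin W\}\subseteq\{r\in J:f(r)\notin V\}$, whence
\[
\mu(\{r\in J:\varphi^{\bullet}(f)(r)\notin W\})\leq\mu(\{r\in J:f(r)\notin V\})<\varepsilon,
\]
so that $\varphi^{\bullet}(O(V,\varepsilon))\subseteq O(W,\varepsilon)$, giving continuity.

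Finally, assume $\varphi$ is open and onto. Surjectivity of $\varphi^{\bullet}$ is straightforward: any $h\in H^{\bullet}$ takes finitely many values, each with a $\varphi$-preimage, and assembling these preimages over the corresponding intervals of $J$ produces an $f\in G^{\bullet}$ with $\varphi^{\bullet}(f)=h$. For openness, again using that $\varphi^{\bullet}$ is a surjective homomorphism whose translations are homeomorphisms, it suffices to show that the image of each basic neighbourhood $O(V,\varepsilon)$ of $\mathbf{0}^{\bullet}$ is a neighbourhood of $\mathbf{0}^{\bullet}$ in $H^{\bullet}$. Put $W=\varphi(V)$, an open neighbourhood of $\mathbf{0}$ in $H$ since $\varphi$ is open; I claim $O(W,\varepsilon)\subseteq\varphi^{\bullet}(O(V,\varepsilon))$. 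Given $h\in O(W,\varepsilon)$ with values $y_{0},\ldots,y_{m-1}$, I would lift each value carefully: whenever $y_{k}\in W=\varphi(V)$ choose a preimage $x_{k}\in V$, and otherwise choose an arbitrary preimage $x_{k}\in G$ (available by surjectivity). The resulting step function $f$ satisfies $\varphi^{\bullet}(f)=h$ together with $\{r\in J:f(r)\notin V\}\subseteq\{r\in J:h(r)\notin W\}$, so $f\in O(V,\varepsilon)$, which proves the claim.

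I expect the openness part, and specifically this last lifting, to be the main obstacle: one must select preimages that simultaneously reconstruct $h$ and keep the exceptional set $\{r:f(r)\notin V\}$ of measure below $\varepsilon$. The device that makes it work is to place the chosen preimage inside $V$ on exactly those intervals where $h$ already lies in $\varphi(V)$, so that the exceptional set for $f$ is contained in the exceptional set for $h$; the surjectivity of $\varphi$ is then needed only to fill in the remaining intervals, whose total measure is already controlled by the hypothesis $h\in O(W,\varepsilon)$.
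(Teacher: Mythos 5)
Your proposal is correct and follows essentially the same route as the paper: the same pointwise definition $\varphi^{\bullet}(f)=\varphi\circ f$, the same pointwise verification of the homomorphism identity, and the same measure estimate $\varphi^{\bullet}(O(U,\varepsilon))\subseteq O(W,\varepsilon)$ for continuity. The one place where you genuinely improve on the paper is the openness step: the paper asserts the biconditional $f\in O(V,\varepsilon)\Leftrightarrow \mu(\{r:\varphi(f(r))\notin\varphi(V)\})<\varepsilon$, whose backward direction is false in general (since $\varphi(x)\in\varphi(V)$ need not give $x\in V$), and it verifies surjectivity only for constant functions. Your lifting device --- choosing the preimage $x_{k}$ inside $V$ precisely on those intervals where $h$ already lies in $\varphi(V)$, so that $\{r:f(r)\notin V\}\subseteq\{r:h(r)\notin W\}$ --- is exactly what is needed to make the inclusion $O(W,\varepsilon)\subseteq\varphi^{\bullet}(O(V,\varepsilon))$ rigorous, and the same interval-by-interval lifting gives full surjectivity. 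So your version is the more careful one on that point; no gaps.
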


\begin{proof}
For each $f\in G^{\bullet}$, define $\varphi^{\bullet}(f)\in H^{\bullet}$ by $\varphi^{\bullet}(f)(r)=\varphi(f(r))$ for each $r\in J=[0,1)$. If $f,g\in G^{\bullet}$ and $r\in J$, thus $$\varphi^{\bullet}(f\oplus^{\bullet}g)(r)=\varphi(f(r)\oplus g(r))=\varphi(f(r))\oplus \varphi (g(r))=[\varphi^{\bullet}(f)\oplus\varphi^{\bullet}(g)](r).$$
Therefore, $\varphi^{\bullet}(f\oplus^{\bullet}g)=\varphi^{\bullet}(f)\oplus\varphi^{\bullet}(g)$, and we conclude that $\varphi^{\bullet}:G^{\bullet}\rightarrow H^{\bullet}$ is a groupoid homomorphism.

Let us show that $\varphi^{\bullet}$ is continuous, choose an open neighbourhood $V$ of the identity in $H$ and a real number $\varepsilon>0$. By the continuity of $\varphi$, we can find an open neighbourhood $U$ of the identity in $G$ such that $\varphi(U)\subseteq V$. Hence the definition of $\varphi^{\bullet}$ implies immediately
that $\varphi^{\bullet}(O(U,\varepsilon))\subseteq O(V,\varepsilon)$, which proves that $\varphi^{\bullet}$ is continuous. It is easy to see that $\varphi^{\bullet}\circ i_{G}=i_{H}\circ\varphi$, where $i_{G}:G\rightarrow G^{\bullet}$ and $i_{H}:H\rightarrow H^{\bullet}$ are natural topological groupoid monomorphisms defined in Theorem \ref{3mt2}. In particular, identifying $G$ with $i_{G}(G)$ and $H$ with $i_{H}(H)$, the above equality takes the form $\varphi^{\bullet}\upharpoonright G=\varphi$, i.e., $\varphi^{\bullet}$ is a continuous extension of $\varphi$ over $G^{\bullet}$.

Finally, assume that the groupoid homomorphism $\varphi$ is open and $\varphi(G)=H$. We may assume that $\varphi^{\bullet}(G^{\bullet})= H^{\bullet}$ and $\varphi^{\bullet}(O(V,\varepsilon))=O(W,\varepsilon)$ for every $V\in \mathcal{N}_{G}(e)$ and $\varepsilon>0$, where $W=\varphi(V)$ is an open neighbourhood of the identity in $H$. For any $h^{\bullet}\in H^{\bullet}$, thus $\varphi (g)=h$ for some $g\in G$. It is easy to verify that $\varphi^{\bullet}(g^{\bullet})= h^{\bullet}$. Indeed, $$\varphi^{\bullet}(g^{\bullet})(r)=\varphi(g(r))=\varphi(g)=h=h^{\bullet}(r),$$ thus $\varphi^{\bullet}(g^{\bullet})= h^{\bullet}$. Since
\begin{eqnarray}
 f\in O(V,\varepsilon)&\Leftrightarrow& \mu(\{r\in J: f(r)\not\in V\})<\varepsilon\nonumber\\
 &\Leftrightarrow& \mu(\{r\in J: \varphi(f(r))\not\in \varphi(V)\})<\varepsilon\nonumber\\
 &\Leftrightarrow& \mu(\{r\in J: \varphi^{\bullet}(f)(r)\not\in W\})<\varepsilon\nonumber\\
 &\Leftrightarrow& \varphi^{\bullet}(f)\in O(W,\varepsilon),\nonumber
\end{eqnarray}
 then $\varphi^{\bullet}(O(V,\varepsilon))=O(W,\varepsilon)$. Therefore, $\varphi^{\bullet}$ is open and onto.
\end{proof}

The correspondences $G \mapsto G^{\bullet}$ and $\varphi \mapsto \varphi ^{\bullet}$ are functorial since the equality $(\psi\circ\varphi)^{\bullet}=\psi^{\bullet}\circ \varphi^{\bullet}$ holds for any continuous groupoid homomorphisms $\varphi:G\rightarrow H$ and $\psi:H\rightarrow K$. This defines the covariant functor $^{\bullet}$ in the category of topological gyrogroups and continuous groupoid homomorphisms.

\begin{proposition}
Let $G$ be a topological gyrogroup and $H$ a subgyrogroup of $G$. If $\varphi$ is the identity embedding of $H$ to $G$, then the natural homomorphic extension $\varphi^{\bullet}:H^{\bullet}\rightarrow G^{\bullet}$ is a topological groupoid monomorphism.
\end{proposition}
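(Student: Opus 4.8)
The plan is to unwind the definition of a topological groupoid monomorphism into its three constituents and dispose of them in turn: that $\varphi^{\bullet}$ is a groupoid homomorphism, that it is injective, and that it is a homeomorphism onto its image $\varphi^{\bullet}(H^{\bullet})$ carrying the subspace topology inherited from $G^{\bullet}$. The first of these is free, since by the preceding proposition the extension $\varphi^{\bullet}$ exists and is already a continuous groupoid homomorphism. So only injectivity and the embedding property remain. Throughout I would identify $\varphi^{\bullet}(H^{\bullet})$ with the set of those step functions $g\in G^{\bullet}$ all of whose values lie in $H$; this is legitimate because $\varphi$ is the inclusion $H\hookrightarrow G$, so $\varphi^{\bullet}$ acts by $\varphi^{\bullet}(f)(r)=\varphi(f(r))=f(r)$ for every $r\in J$.

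Injectivity is immediate: if $\varphi^{\bullet}(f_{1})=\varphi^{\bullet}(f_{2})$ then $f_{1}(r)=f_{2}(r)$ in $G$ for every $r\in J$, and since both values lie in $H$ this gives $f_{1}=f_{2}$ in $H^{\bullet}$. For the embedding property I would exploit homogeneity. Both $H^{\bullet}$ and $G^{\bullet}$ are topological gyrogroups, so by the left cancellation law (Lemma~\ref{a}(1)) every left translation $L_{a}\colon x\mapsto a\oplus^{\bullet}x$ is a homeomorphism with inverse $L_{\ominus^{\bullet}a}$, and $\varphi^{\bullet}$ intertwines translations, that is $\varphi^{\bullet}\circ L_{f_{0}}=L_{\varphi^{\bullet}(f_{0})}\circ\varphi^{\bullet}$. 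Hence it suffices to show that $\varphi^{\bullet}$ carries the canonical neighbourhood base $\{O_{H}(V,\varepsilon)\}$ of $\mathbf{0}^{\bullet}$ in $H^{\bullet}$ onto a neighbourhood base of $\mathbf{0}^{\bullet}$ in the subspace $\varphi^{\bullet}(H^{\bullet})$.

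The crux is a single identity. Fix a neighbourhood $V$ of $0$ in $H$ and $\varepsilon>0$; since $H$ carries the subspace topology, $V=W\cap H$ for some neighbourhood $W$ of $0$ in $G$. I claim that
$$\varphi^{\bullet}(O_{H}(V,\varepsilon))=\varphi^{\bullet}(H^{\bullet})\cap O_{G}(W,\varepsilon).$$
To see this, take $f\in H^{\bullet}$; because every value $f(r)$ lies in $H$, the condition $f(r)\notin W$ is equivalent to $f(r)\notin W\cap H=V$, so the two sets $\{r\in J:\varphi^{\bullet}(f)(r)\notin W\}$ and $\{r\in J:f(r)\notin V\}$ literally coincide and therefore have the same $\mu$-measure. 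Thus $\varphi^{\bullet}(f)\in O_{G}(W,\varepsilon)$ if and only if $f\in O_{H}(V,\varepsilon)$, which gives the displayed equality. This simultaneously re-proves continuity at $\mathbf{0}^{\bullet}$ and shows that each $\varphi^{\bullet}(O_{H}(V,\varepsilon))$ is relatively open in $\varphi^{\bullet}(H^{\bullet})$.

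Finally I would propagate this to an arbitrary point $f_{0}\in H^{\bullet}$ using the translation formula: the image of the basic neighbourhood $f_{0}\oplus^{\bullet}O_{H}(V,\varepsilon)$ is $\varphi^{\bullet}(f_{0})\oplus^{\bullet}(\varphi^{\bullet}(H^{\bullet})\cap O_{G}(W,\varepsilon))$, and since $L_{\varphi^{\bullet}(f_{0})}$ is an injective homeomorphism of $G^{\bullet}$ that maps the subgyrogroup $\varphi^{\bullet}(H^{\bullet})$ onto itself (closure under $\oplus^{\bullet}$ together with the cancellation law of Lemma~\ref{a}(1)), this set equals $\varphi^{\bullet}(H^{\bullet})\cap(\varphi^{\bullet}(f_{0})\oplus^{\bullet}O_{G}(W,\varepsilon))$, which is relatively open. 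Hence $\varphi^{\bullet}$ sends a neighbourhood base at each point to a neighbourhood base in the subspace, so it is a topological embedding, and together with injectivity and the homomorphism property it is a topological groupoid monomorphism. I expect the openness-onto-image direction to be the only genuine obstacle: one must pass correctly between the subspace topology of $H$ inside $G$ and the measure-defined neighbourhoods $O(V,\varepsilon)$, and the equality $V=W\cap H$ combined with the fact that $f$ takes values only in $H$ is exactly what forces the two relevant measurable sets to be identical; the rest is routine homogeneity bookkeeping.
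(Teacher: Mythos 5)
Your proposal is correct and follows essentially the same route as the paper: the paper's entire argument is the single identity $(\varphi^{\bullet})^{-1}(O_G(V,\varepsilon))=O_H(W,\varepsilon)$ with $W=H\cap V$, which is exactly your key observation that for a step function with values in $H$ the sets $\{r:f(r)\notin V\}$ and $\{r:f(r)\notin V\cap H\}$ coincide. You merely spell out the injectivity and the translation bookkeeping that the paper leaves implicit.
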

\begin{proof}
\smallskip
It is clear that $\varphi^{\bullet}$ is a groupoid monomorphism. Let $V$ be an arbitrary open
neighbourhood of the neutral element in $G$. Put $W=H\cap V$. For any $\varepsilon>0$, we have that $(\phi^{\bullet})^{-1}(O(V,\varepsilon))=O(W,\varepsilon)$.
\end{proof}

Finally, we prove that if a topological gyrogroup is metrizable, separable and $\sigma$-compact then $G^{\bullet}$ is metrizable, separable and $\sigma$-compact respectively. First, note that it was proved in \cite{BL3} that for any cardinality $\kappa > \omega$, there exists a gyrogroup $G$
with subgyrogroup $H$ of the cardinality $\kappa$ such that $H$ is not a group.

\begin{definition}\cite{GG} Let $\mathcal{P}$ be a family of subsets of a space $X$. The family $\mathcal{P}$ is called a {\it network} if for each $x\in X$ and any open neighborhood $U$
of $x$ there exists $P\in \mathcal{P}$ such that $x\in P\subseteq U$.
\end{definition}

\begin{definition}\cite{BLL} A topological gyrogroup $G$ is called {\it left (right) $\kappa$-narrow} if, for each neighborhood $U$ of $\mathbf{0}$ in $G$, we can find a subset $K \subseteq G$ with $|K| \leq \tau$ satisfying $K \oplus U = G$ $(U \oplus K = G)$ for an infinite cardinality $\kappa$ . We call $G$ {\it $\kappa$-narrow} if it is both left $\kappa$-narrow and right $\kappa$-narrow.
\end{definition}

\begin{theorem}
Let $\kappa$ be an infinite cardinal number, and let $G$ be a topological gyrogroup
having one of the following properties:

\smallskip
(1) $G$ is metrizable;

\smallskip
(2) $G$ has a local base at the identity of cardinality $\leq \kappa$;

\smallskip
(3) $G$ has a network of cardinality $\leq \kappa$;

\smallskip
(4) $G$ has a dense subset of cardinality $\leq \kappa$;
\smallskip

(5) $G$ is $\kappa$-narrow.

Then the gyrogroup $G^{\bullet}$ has the same property.
\end{theorem}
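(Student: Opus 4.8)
\section*{Proof proposal}

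The plan is to treat the five properties in the order $(2),(1),(4),(5),(3)$, since the harder cases reuse bookkeeping set up for the earlier ones. Throughout I write $O(V,\varepsilon)$ for the basic neighbourhoods of $\mathbf{0}^{\bullet}$ furnished by Theorem~\ref{tttt}, and I repeatedly exploit that a topological gyrogroup is homogeneous, so any statement about a local base, a network, or narrowness may be checked at the identity $\mathbf{0}^{\bullet}$ alone. The common device for $(3)$, $(4)$ and $(5)$ is the countable family of \emph{rational partitions} $0=q_{0}<q_{1}<\cdots<q_{m}=1$ of $J$ with all $q_{i}\in\mathbb{Q}$, together with step functions whose breakpoints are rational and whose finitely many values are drawn from a prescribed set of size $\leq\kappa$. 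The recurring estimate is that, after replacing the possibly irrational breakpoints of an arbitrary $f\in G^{\bullet}$ by nearby rationals, the measure of the resulting mismatch can be made arbitrarily small; the algebraic input is always the left cancellation law $\ominus x_{k}\oplus(x_{k}\oplus v)=v$ of Lemma~\ref{a}.

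For $(2)$, if $\{V_{\alpha}:\alpha<\kappa\}$ is a local base at $\mathbf{0}$ in $G$, I would show that $\{O(V_{\alpha},1/n):\alpha<\kappa,\ n\in\mathbb{N}\}$ is a local base at $\mathbf{0}^{\bullet}$: given $O(V,\varepsilon)$, choose $V_{\alpha}\subseteq V$ and $1/n\leq\varepsilon$, and note $O(V_{\alpha},1/n)\subseteq O(V,\varepsilon)$ since $\{r:f(r)\notin V\}\subseteq\{r:f(r)\notin V_{\alpha}\}$; this family has cardinality $\kappa\cdot\omega=\kappa$. Property $(1)$ then needs no separate measure argument: it is immediate from the preceding theorem, which shows that a metric $d$ generating the topology of $G$ extends to a metric $d^{\bullet}$ generating the topology of $G^{\bullet}$ (alternatively, $(1)$ follows from $(2)$ together with the theorem of Cai, Lin and He that a first-countable topological gyrogroup is metrizable).

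For $(4)$, let $D$ be dense in $G$ with $|D|\leq\kappa$ and let $S$ be the set of step functions with rational breakpoints taking finitely many values in $D$, so $|S|\leq\omega\cdot\kappa=\kappa$. Given $f\in G^{\bullet}$ and a basic neighbourhood $f\oplus^{\bullet}O(V,\varepsilon)$, I would pick for each value $x_{k}$ of $f$ a point $d_{k}\in D\cap(x_{k}\oplus V)$, so that $\ominus x_{k}\oplus d_{k}\in V$, and replace the breakpoints of $f$ by nearby rationals; the resulting $g\in S$ satisfies $\mu(\{r:\ominus f(r)\oplus g(r)\notin V\})<\varepsilon$, whence $g\in f\oplus^{\bullet}O(V,\varepsilon)$ and $S$ is dense. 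Property $(5)$ is parallel: using left $\kappa$-narrowness of $G$ choose $K\subseteq G$ with $|K|\leq\kappa$ and $K\oplus V=G$, let $\mathcal{K}$ be the step functions with rational breakpoints and values in $K$, and for $f\in G^{\bullet}$ select $h\in\mathcal{K}$ taking, wherever $f=x_{k}$, a value $c_{k}\in K$ with $c_{k}\oplus v=x_{k}$ for some $v\in V$; then $\ominus h(r)\oplus f(r)\in V$ off a set of small measure, giving $f\in h\oplus^{\bullet}O(V,\varepsilon)$ and hence $\mathcal{K}\oplus^{\bullet}O(V,\varepsilon)=G^{\bullet}$. The right-narrow half is symmetric.

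The main obstacle is $(3)$, because a network need not consist of open sets yet must sandwich each point as $f\in N\subseteq U$, while the topology of $G^{\bullet}$ records only measures, not pointwise behaviour on fixed intervals. I would take the candidate network $\mathcal{N}$ to consist of all sets
$$N=\Big\{g\in G^{\bullet}:\textstyle\sum_{k=1}^{m}\mu(\{r\in[q_{k-1},q_{k}):g(r)\notin P_{k}\})<\delta\Big\},$$
indexed by a rational partition $q_{\bullet}$, a tuple $(P_{1},\dots,P_{m})\in\mathcal{P}^{m}$ from a network $\mathcal{P}$ of $G$, and a rational $\delta>0$, so that $|\mathcal{N}|\leq\omega\cdot\kappa\cdot\omega=\kappa$. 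To verify the network property for $f$ and $U=f\oplus^{\bullet}O(V,\varepsilon)$, I would use $\mathcal{P}$ to choose, for each value $x_{k}$ of $f$, a set $P_{k}$ with $x_{k}\in P_{k}\subseteq x_{k}\oplus V$, align the breakpoints of $f$ by a rational partition whose misalignment has measure below $\varepsilon/2$, and take $\delta<\varepsilon/2$. Then $f\in N$ because $f=x_{k}\in P_{k}$ off the misalignment set, and $N\subseteq U$ because for $g\in N$ the set $\{r:\ominus f(r)\oplus g(r)\notin V\}$ lies in the union of the ``$g\notin P_{k}$'' set (measure $<\delta$) and the misalignment set (measure $<\varepsilon/2$), using $P_{k}\subseteq x_{k}\oplus V$ and the cancellation law exactly as in $(4)$. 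The delicate points are the simultaneous control of these two small-measure contributions and the cardinality bookkeeping $\kappa^{m}=\kappa$; once these are in place, all five implications close.
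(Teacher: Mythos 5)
Your proposal is correct and follows essentially the same route as the paper's proof: the same local base $\{O(V_\alpha,1/n)\}$ for (2), deducing (1) from (2) via the metrization theorem for topological gyrogroups, rational-breakpoint step functions with values in a dense set (resp.\ a narrowing set) for (4) and (5), and for (3) exactly the paper's network of sets $Q(m,n,\vec b,\vec p)$ controlling the measure of $\{r\in[b_k,b_{k+1}):g(r)\notin P_{k+1}\}$, with the same two-small-sets estimate (misalignment plus the $\delta$-bad set) and left cancellation. No substantive differences to report.
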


\begin{proof}
Since, the metrizability of a topological gyrogroup is equivalent
to having a countable base at the identity\cite{CZ}, item (1) follows from (2) with $\kappa=\omega$. Hence, assume that $G$ has a local base $\mathcal{N}$ at the identity $\mathbf{0}$ satisfying $|\mathcal{N}|\leq \kappa$. Therefore, according to  the definition of the topology of $G^{\bullet}$, the family $\mathcal{N}^{\bullet}=\{O(V,1/n):V\in\mathcal{N},n\in \mathbb{N}\}$ is a local base at the identity $\mathbf{0}^{\bullet}$ of $G^{\bullet}$, and $|\mathcal{N}^{\bullet}|\leq |\mathcal{N}|\cdot\omega\leq\kappa$. This follows the assertion of the theorem for (1) and (2).

For (4), choose a dense set $D\subseteq G$ with $|D|\leq \kappa$. Denoted by $S$ the set of all $f\in G^{\bullet}$ for which there exist rational numbers $b_{0},b_{1},\cdots,b_{m}$ with $0=b_{0}<b_{1}<\cdots<b_{m}=1$ such that $f$ is constant on each semi-open interval $J_{k}=[b_{k},b_{k+1})$ and takes  $x_{k}\in D$ on $J_{k}$. It is easy to see that $|S|\leq|D|\cdot\omega\leq\kappa$, and let us show that $S$ is dense in $G^{\bullet}$. Indeed, let $f\oplus^{\bullet}O(V,\varepsilon)$ be a basic open neighbourhood of $f\in G^{\bullet}$, where $V$ is an open neighbourhood of $\mathbf{0}$ in $G$ and $\varepsilon >0$. Thus we can find $0=a_{0}<a_{1}<\cdots<a_{n}=1$ such that the function $f$ is constant on $[a_{k},a_{k+1})$ for every $k<n$. Choose rational numbers $b_{1},\cdots,b_{n-1}$ in $J$ such that $a_{k}\leq b_{k}<a_{k+1}$ for every $k<n$ and $\sum ^{n-1}_{k=1}(b_{k}-a_{k})<\varepsilon$. Also, put $b_{0}=0$ and $b_{n}=1$. For each $k<n$, take a point $y_{k}\in D\cap (x_{k}\oplus V)$, where $x_{k}=f(a_{k})$, and define an element $g\in S$ by letting $g(r)=y_{k}$ for any $r\in [b_{k},b_{k+1}), k=0,\cdots,n-1$. It follows that $g\in f\oplus^{\bullet}O(V,\varepsilon)$, hence $S$ is dense in $G^{\bullet}$ and $|S|\leq\kappa$.

In the case of (3), take a network $\mathcal{P}$ for $G$ with $|\mathcal{P}|\leq\kappa$. For each $m\in \mathbb{N}$, let $J(m)$ is the set of all $m$-tuples $(b_{1},\cdots,b_{m})$ of rational numbers such that $0<b_{1}<\cdots<b_{m}=1$. Given $m,n\in\mathbb{N}$, an element $\vec{b}=(b_{1},\cdots,b_{m})\in J(m)$ and $\vec{P}=(P_{1},\cdots,P_{m})\in \mathcal{P}^{m}$, denote a subset $Q(m, n,\vec{b}, \vec{p})$ of $G^{\bullet}$ as the set of every $g\in G^{\bullet}$ such that the measure (with respect to the Lebesgue measure $\mu$ on $J$) of the set of all $r\in J$ satisfying $b_{k}\leq r<b_{k+1}$ and $g(r)\not \in P_{k+1}$, for some $k=0,1,\cdots,m-1$, is less than $1/n$ (we always put $b_{0}=0$). Then the family $\mathcal{A}$ of all sets $Q(m,n,\vec{b},\vec{p})$ with $m,n\in \mathbb{N}, \vec{b}\in J(m)$ and $\vec{P}\in \mathcal{P}^{m}$ has the cardinality less than or equal to $\kappa$, and we have to show that $\mathcal{A}$ is a network for $G^{\bullet}$.

Indeed, take arbitrary $f\in G^{\bullet}$ and $O(V,\varepsilon)\ni \mathbf{0}^{\bullet}$. There exist numbers $0=a_{0}<a_{1}<\cdots<a_{m}=1$ such that $f$ is constant on each semi-open interval $[a_{k},a_{k+1})$. For every $k=0,1,\cdots,m-1$, put $x_{k}=f(a_{k})$ and choose an element $P_{k+1}\in \mathcal{P}$ such that $x_{k}\in P_{k+1}\subseteq x_{k}\oplus V$. Then take $n\in \mathbb{N}$ with $1/n<\varepsilon$ and choose $\vec{b}=(b_{1},\cdots,b_{m})\in J(m)$ such that $a_{k}\leq b_{k}<a_{k+1}$ for each $k=1,\cdots,m-1$, and $\sum^{m-1}_{k=1}(b_{k}-a_{k})<1/(2n)$. All we need to proof that $f\in Q(m, 2n,\vec{b}, \vec{p})\subseteq f\oplus^{\bullet}O(V,\varepsilon)$, where $\vec{p}=(P_{1},\cdots,P_{m})\in \mathcal{P}^{m}$. Let $b_{0}=0$ and $b_{m}=1$.

It follows from the choice of $\vec{b}$ and $\vec{p}$ that $f(r)=x_{k}\in P_{k+1}$ for each $r\in [b_{k},a_{k+1})$, where $k=0,1,\cdots,m-1$. Hence, if $b_{k}\leq r<b_{k+1}$ and $f(r)\not\in P_{k+1}$ for some $k<m$, then $r\in [a_{k+1},b_{k+1})$ and $k+1\not=m$. Since $\sum^{m-1}_{k=1}(b_{k}-a_{k})<1/(2n)$, it follows that $f\in Q(m,2n,\vec{b},\vec{p})$. we could show that $Q(m,2n,\vec{b},\vec{p})\subseteq f\oplus^{\bullet}O(V,\varepsilon)$, it suffices to verify that $(\ominus^{\bullet} f)\oplus^{\bullet} g\in O(V,\varepsilon)$ for every $g\in Q(m,2n,\vec{b},\vec{p})$. Let $g\in Q(m,2n,\vec{b},\vec{p})$ be arbitrary. It follows from the definition of $Q(m,2n,\vec{b},\vec{p})$ that the set $$L=\{r\in J:b_{k}\leq r<b_{k+1} \mbox{ and } g(r)\not\in P_{k+1} \mbox{ for some } k<m\}$$
satisfies $\mu(L)<1/(2n)$. If $r\in (J\setminus L)$ and $b_{k}\leq r<a_{k+1}$ for some $k<m$, then $g(r)\in P_{k+1}$ and $f(r)=x_{k}$, whence it follows that $((\ominus^{\bullet} f)\oplus ^{\bullet}g)(r)=(\ominus x_{k})\oplus g(r)\in x_{k}\oplus P(k+1)\subseteq V$. This implies that $$M=\{r\in J:((\ominus^{\bullet} f)\oplus ^{\bullet}g)(r)\not\in V\}\subseteq L\cup\bigcup^{m-1}_{k=1}[a_{k},b_{k}),$$ so that $\mu(M)<1/(2n)+1/(2n)=1/n$. Hence, $(\ominus f)\oplus g\in O(V,\varepsilon)$ and $g\in f\oplus^{\bullet}O(V,\varepsilon)$, as claimed. This proves that $\mathcal{A}$ is a network for the gyrogroup $G^{\bullet}$.

Finally, assune that the gyrogroup $G$ is $\kappa$-narrow. We have to show that the gyrogroup $G^{\bullet}$ is left $\kappa$-narrow. Let $O=O(V,\varepsilon)$ be a basic neighbourhood of $\mathbf{0}^{\bullet}$ in $G^{\bullet}$, where $V$ is an open neighbourhood of $\mathbf{0}$ in $G$ and $\varepsilon$ is a positive real number. Thus we can find  a set $D\subseteq G$ such that $G=D\oplus V$ and $|D|\leq \kappa$. Denote by $S$ the set of elements $g\in G^{\bullet}$ such that, for some rational numbers $b_{0},b_{1},\cdots,b_{n}$ with $0=b_{0}<b_{1}<\cdots<b_{n}=1$, the function $g$ is constant on each $J_{k}=[b_{k},b_{k+1})$, and the value of $g$ on $J_{k}$ is an element of $D$. It is clear that $|S|\leq \kappa$. It remains to show that $G^{\bullet}=S\oplus^{\bullet}O$. For any  $f\in G^{\bullet}$, one can find $a_{0},a_{1},\cdots,a_{n}$ with $0=a_{0}<a_{1}<\cdots<a_{n}=1$ such
that $f$ is constant on each $[a_{k},a_{k+1})$. Similarly to the construction in (4), take rational numbers $b_{0},b_{1},\cdots,b_{n}$ such that $b_{0}=0,b_{n}=1, a_{k}\leq b_{k}< a_{k+1}$, for $k=1,\cdots,n-1$, and $\sum^{n-1}_{k=1}(b_{k}-a_{k})<\varepsilon$. For every $k\in \{0,1,\cdots,n-1\}$, choose an element $x_{k}\in D$ such that $f(a_{k})\in x_{k}\oplus V$. Denote by $g$ the element of $G^{\bullet}$ which is constant on each $J_{k}=[b_{k},b_{k+1})$ and takes the value $x_{k}$ on $J_{k}$. Hence, $g\in S$, and it is easy to verify that $f\in g\oplus^{\bullet}O$. This proves the equality $G^{\bullet}=S\oplus^{\bullet}O$ and and implies that the gyrogroup $G^{\bullet}$ is left $\kappa$-narrow. Similarly, we can proof that the gyrogroup $G^{\bullet}$ is right $\kappa$-narrow. Therefore, $G^{\bullet}$ is $\kappa$-narrow.
\end{proof}

Recall that a space $X$ is called {\it $\sigma$-compact} if it can be expressed as the union of countable compact sets.

\begin{theorem}
The gyrogroup $G^{\bullet}$ is $\sigma$-compact if and only if $G$ is  $\sigma$-compact.\label{3mt11}
\end{theorem}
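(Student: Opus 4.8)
The plan is to prove both implications of the equivalence, with the direction from $G$ to $G^{\bullet}$ being the substantial one.

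For the direction that $\sigma$-compactness of $G^{\bullet}$ forces $\sigma$-compactness of $G$, I would simply invoke Theorem~\ref{3mt2}: the natural map $i_{G}$ identifies $G$ with the \emph{closed} subgyrogroup $i_{G}(G)$ of $G^{\bullet}$. Writing $G^{\bullet}=\bigcup_{n}K_{n}$ with each $K_{n}$ compact, the sets $i_{G}(G)\cap K_{n}$ are closed in the compact set $K_{n}$, hence compact, and their union is $i_{G}(G)$. Thus $i_{G}(G)$, and therefore the homeomorphic copy $G$, is $\sigma$-compact. This is a short argument.

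For the converse, suppose $G=\bigcup_{m}C_{m}$ with each $C_{m}$ compact. Replacing $C_{m}$ by $\bigcup_{i\le m}C_{i}\cup\{\mathbf{0}\}$, I may assume the $C_{m}$ are increasing and contain $\mathbf{0}$, so that every finite subset of $G$ lies in some $C_{m}$. The idea is to display $G^{\bullet}$ as a countable union of compact sets, each the continuous image of a compact parameter space. For $N,m\in\mathbb{N}$ let $\Delta_{N}=\{(a_{1},\dots,a_{N-1}):0\le a_{1}\le\cdots\le a_{N-1}\le 1\}$ be the compact simplex of admissible interior partition points, put $a_{0}=0$ and $a_{N}=1$, and define $\Phi_{N,m}\colon\Delta_{N}\times C_{m}^{\,N}\to G^{\bullet}$ by sending $(a_{1},\dots,a_{N-1},x_{0},\dots,x_{N-1})$ to the step function equal to $x_{k}$ on $[a_{k},a_{k+1})$. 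The domain is compact (allowing $a_{i}=a_{i+1}$ handles functions with fewer than $N$ intervals), and every $f\in G^{\bullet}$ equals $\Phi_{N,m}$ of some parameter, taking $N$ to be the number of its intervals and $m$ large enough that all its finitely many values lie in $C_{m}$. Hence, once each $\Phi_{N,m}$ is shown continuous, the images $K_{N,m}=\Phi_{N,m}(\Delta_{N}\times C_{m}^{\,N})$ are compact and cover $G^{\bullet}$, proving $\sigma$-compactness.

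The crux, and where I expect the main difficulty to lie, is the continuity of $\Phi_{N,m}$. Since $G^{\bullet}$ is a homogeneous topological gyrogroup, the sets $f\oplus^{\bullet}O(V,\varepsilon)$ form a neighbourhood base at $f$, and by the left cancellation law (Lemma~\ref{a}(1)) a perturbed step function $f'$ lies in $f\oplus^{\bullet}O(V,\varepsilon)$ exactly when $\mu(\{r\in J:\ominus f(r)\oplus f'(r)\notin V\})<\varepsilon$. To estimate this measure I would overlay the two partitions and split $J$ into the subintervals on which $f$ and $f'$ carry the same index $k$, where $f(r)=x_{k}$ and $f'(r)=x_{k}'$, and the remaining ``misaligned'' subintervals. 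The indices agree off $\bigcup_{i}[\min(a_{i},a_{i}'),\max(a_{i},a_{i}'))$, so the misaligned set has total length at most $\sum_{i}|a_{i}-a_{i}'|$, which is made smaller than $\varepsilon$ by keeping the partition points close; there the difference may leave $V$, but only on a set of small measure. On the aligned part I need $\ominus x_{k}\oplus x_{k}'\in V$, and here compactness of $C_{m}$ is essential: since $(y,y')\mapsto\ominus y\oplus y'$ is continuous and equals $\mathbf{0}$ on the diagonal, each $x_{k}$ has a neighbourhood $W_{k}$ with $\ominus x_{k}\oplus W_{k}\subseteq V$, so choosing $x_{k}'\in W_{k}$ keeps the aligned part out of the bad set. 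Thus a sufficiently small neighbourhood of the parameter maps into $f\oplus^{\bullet}O(V,\varepsilon)$, giving continuity. The delicate point throughout is that $G$ need not be metrizable, so every estimate must be phrased in terms of neighbourhoods of $\mathbf{0}$ rather than a metric, and the control of the misaligned set must be extracted purely from the Lebesgue measure of the shifted partition points.
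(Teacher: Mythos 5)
Your proof is correct and takes essentially the same route as the paper: the closed-subgyrogroup argument for necessity, and for sufficiency the realization of $G^{\bullet}$ as a countable union of continuous images of compact parameter spaces (partition points) $\times$ (tuples of values from the compact pieces of $G$), with the same perturbation estimate for continuity of the evaluation map. The only divergence is technical and harmless: the paper restricts the partition parameters to the compact slices $A_{n,m}$ with consecutive gaps at least $1/m$ and proves continuity only there, whereas you use the full closed simplex and handle degenerate partitions via the bound $\mu(\mbox{misaligned set})\leq\sum_{i}|a_{i}-a_{i}'|$, which does work and drops one index from the union (your remark that compactness of $C_{m}$ is needed to produce the neighbourhoods $W_{k}$ is not quite right---there are only finitely many $x_{k}$---but this does not affect the argument).
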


\begin{proof}
 The necessary is obvious since $G$ is topologically groupoid isomorphic to a closed
subgyrogroup of $G^{\bullet}$, by Theorem \ref{3mt2}. Conversely, let $G$ be the union of compact sets $K_{i}$, where $i\in \omega$. We can assume that $K_{i}\subseteq K_{i+1}$ for each $i\in \omega$. Let $I$ be the closed unit segment with usual interval topology. For every $n,m\in \mathbb{N}$, let $$A_{n}=\{(a_{1},\cdots,a_{n})\in I^{n}:0<a_{1}<\cdots<a_{n}<1\}$$ and $$A_{n,m}=\{(a_{1},\cdots,a_{n})\in A_{n}:a_{k+1}-a_{k}\geq 1/m \mbox{ for each }\ k\leq n\},$$
where $a_{0}=0$ and $a_{n+1}=1$. It is clear that $A_{n}=\bigcup^{\infty}_{m=1}A_{n,m}$ and that each $A_{n,m}$ is closed in $I^{n}$. In particular, the sets $A_{n,m}$ are compact.

Given $n\in\mathbb{N}$, we define a mapping $\varphi_{n}:G_{n+1}\times A_{n}\rightarrow G^{\bullet}$ by the rule $$\varphi_{n}(x_{0},\cdots,x_{n},a_{1},\cdots,a_{n})=f,$$ where the function $f:J\rightarrow G$ takes the constant value $x_{k}$ on $[a_{k},a_{k+1})$ for each $k\leq n$. We claim that the restriction of $\varphi_{n}$ to $G_{n+1}\times A_{n,m}$ is continuous for each $m\in \mathbb{N}$. Indeed, take $p=(x_{0},\cdots,x_{n}, a_{1},\cdots,a_{n})\in G_{n+1}\times A_{n,m}$ and put $f=\varphi_{n}(p)$. Consider a
basic open neighbourhood $f\oplus^{\bullet}O(V,\varepsilon)$ of $f$ in $G^{\bullet}$, where $V$ is an open neighbourhood of the identity in $G$ and $\varepsilon>0$. Choose a positive number $\delta<\mbox{min}\{\varepsilon/(2n),1/(2m)\}$, and define a neighbourhood $W$ of $p$ in $G_{n+1}\times R_{n}$ by $$W=x_{0}\oplus V\times\l\cdots\times x_{n}\oplus V\times(a_{1}-\delta,a_{1}+\delta)\times\cdots\times (a_{n}-\delta,a_{n}+\delta).$$

Let us show that $\varphi_{n}(q)\in f\oplus^{\bullet}O(V,\varepsilon)$, for each $q\in W\cap (G_{n+1}\times A_{n,m})$. Clearly, $q=(y_{0},\cdots,y_{n},b_{1},\cdots,b_{n})$, where $(y_{0},\cdots,y_{n})\in G_{n+1}$ and $(b_{1},\cdots,b_{n})\in A_{n,m}$. Set
$g=\varphi_{n}(q)$. Clearly, $(\ominus x_{k})\oplus y_{k}\in V$ for each $k\leq n$, and if $r\in J\setminus \bigcup^{n}_{k=1}(a_{k}-\delta,a_{k}+\delta)$ and $a_{k}\leq r<a_{k+1}$ for some $k\leq n$, then $b_{k}\leq r<b_{k+1}$ and $g(r)=y_{k}$ (again, we put $b_{0}=0$
and $b_{n+1}=1$). Hence, $(\ominus f(r))\oplus g(r)=(\ominus x_{k})\oplus y_{k}\in V$. This implies that $$L=\{r\in J:(\ominus^{\bullet}f\oplus^{\bullet}g)(r)\not\in V\}\subseteq \bigcup^{n}_{k=1}(a_{k}-\delta,a_{k}+\delta),$$
so that $\mu(L)\leq 2n\delta<\varepsilon$. Therefore, $(\ominus^{\bullet}f)\oplus^{\bullet}g\in O(V,\varepsilon)$. We conclude that $g=\varphi_{n}(q)$ is an element of $f\oplus^{\bullet}O(V,\varepsilon)$, that is, $\varphi_{n}$ is continuous on $G_{n+1}\times A_{n,m}$. It is easy to see that $G^{\bullet}=\sum^{\infty}_{i,n,m=1}\varphi_{n}(K^{n+1}_{i}\times A_{n,m})$, where each image $\varphi_{n}(K^{n+1}_{i}\times A_{n,m})$ is a compact subset of $G^{\bullet}$ by the continuity of $\varphi_{n}$ on the product space $(G^{n+1}_{i}\times A_{n,m})$. This
proves that the group $G^{\bullet}$ is $\sigma$-compact.
\end{proof}

Combining Theorems \ref{3mt2} and \ref{3mt11}, we deduce the following result.

\begin{corollary}
Every $\sigma$-compact gyrogroup is topologically groupoid isomorphic to a closed
subgyrogroup of a $\sigma$-compact, pathwise connected, locally pathwise connected gyrogroup.
\end{corollary}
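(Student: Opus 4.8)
The plan is simply to assemble the two preceding theorems, since the corollary is an immediate consequence of them and requires no new construction. Starting from a $\sigma$-compact topological gyrogroup $G$, I would first invoke Theorem~\ref{3mt2} to obtain the natural topological groupoid isomorphism $i_{G}\colon G\to G^{\bullet}$ of $G$ onto a closed subgyrogroup of $G^{\bullet}$. By Proposition~\ref{3mt1} the ambient gyrogroup $G^{\bullet}$ is already known to be pathwise connected and locally pathwise connected, so the embedding together with two of the three desired properties of $G^{\bullet}$ come for free from this single application.

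It remains only to secure the third property, $\sigma$-compactness of $G^{\bullet}$. For this I would apply Theorem~\ref{3mt11}, which states that $G^{\bullet}$ is $\sigma$-compact if and only if $G$ is $\sigma$-compact. Since $G$ is assumed $\sigma$-compact by hypothesis, this equivalence yields at once that $G^{\bullet}$ is $\sigma$-compact as well.

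Putting these together, $G^{\bullet}$ is a $\sigma$-compact, pathwise connected, locally pathwise connected topological gyrogroup, and $i_{G}$ realizes $G$ as a closed subgyrogroup of it via a topological groupoid isomorphism onto its image; this is exactly the assertion of the corollary. There is no genuine obstacle here: the result is a direct corollary. The only point deserving a word of care is that all three conclusions --- closedness of the image, the connectivity properties, and $\sigma$-compactness --- refer to the \emph{same} object $G^{\bullet}$ associated to $G$, so Theorems~\ref{3mt2} and~\ref{3mt11} (and Proposition~\ref{3mt1}) can legitimately be applied in tandem to this single gyrogroup, and no compatibility issue between the constructions can arise.
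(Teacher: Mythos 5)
Your proposal is correct and matches the paper exactly: the authors likewise obtain the corollary by combining Theorem~\ref{3mt2} (closed embedding into the pathwise connected, locally pathwise connected $G^{\bullet}$) with Theorem~\ref{3mt11} (the $\sigma$-compactness equivalence). Nothing further is needed.
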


\end{document}